\newcommand{\bD}{\mathbb{D}}
\newcommand{\bC}{\mathbb{C}} 
\newcommand{\diam}{\operatorname{diam}}
\newcommand{\bR}{\mathbb{R}}
\newcommand{\rd}{\mathrm{d}}
\newcommand{\bN}{\mathbb{N}}
\newcommand{\bP}{\mathbb{P}}
\newcommand{\bB}{\mathbb{B}}
\theoremstyle{theorem}
\newtheorem{theorem}{Theorem}[section]
\newtheorem{mainth}{Theorem}
\newtheorem{corollary}[theorem]{Corollary}
\theoremstyle{definition}
\newtheorem{definition}[theorem]{Definition}
\newtheorem*{acknowledgement}{Acknowledgement}
\theoremstyle{remark}
\newtheorem{remark}[theorem]{Remark} 
\newtheorem{notation}[theorem]{Notation} 
\newtheorem{fact}[theorem]{Fact} 
\newtheorem{example}[theorem]{Example}
\numberwithin{equation}{section}
\begin{document} 

\title[Isolated essential singularity of a holomorphic curve]{
A Lehto--Virtanen-type theorem and a rescaling principle for 
an isolated essential singularity of a holomorphic curve
in a complex space}

\date{\today}

\author{Y\^usuke Okuyama}
\address{Division of Mathematics, Kyoto Institute of Technology, Sakyo-ku, Kyoto 606-8585, Japan}
\email{okuyama@kit.ac.jp}
\thanks{Partially supported by JSPS Grant-in-Aid for Young Scientists (B), 24740087.}

\dedicatory{To the memory of Professor Shoshichi Kobayashi}

\subjclass[2010]{Primary 32Q45; Secondary 32H02}

\keywords{holomorphic curve,
isolated essential singularity, complex space, rescaling principle,
Kobayashi hyperbolicity, Brody hyperbolicity}

\begin{abstract}
 We establish a Lehto--Virtanen-type theorem and a rescaling principle
 for an isolated essential singularity of a holomorphic curve in a complex space,
 which are useful for establishing a big Picard-type theorem 
 and a big Brody-type one for holomorphic curves.
\end{abstract}

\maketitle

\section{Introduction}\label{sec:intro}

Let $V$ be a complex space.
For a holomorphic mapping $f:\bD\setminus\{0\}\to V$,
we say that $f$ has an {\itshape isolated essential singularity} at the origin
if $f$ does not extend to a holomorphic mapping from $\bD$ to a complex space. 
Our aim is to contribute to the study of the Kobayashi hyperbolicity and
the Brody one of a complex space by
establishing a Lehto--Virtanen-type theorem and a rescaling principle
for an isolated essential singularity
of a holomorphic curve in a complex space.

\begin{notation}
 Set $\bD(a,r):=\{z\in\bC:|z-a|<r\}$ for every $a\in\bC$ and every $r>0$,
 and set $\bD(r):=\bD(0,r)$ for every $r>0$. Then $\bD(1)=\bD$.
 For every metric $\delta$ on a complex space $V$, 
 set 
 $\diam_{\delta}(S):=\sup\{\delta(a,a'):a,a'\in S\}$
 for a non-empty subset $S$ in $V$. 
 Finally, for a complex space $V$, let $d_V$ be the Kobayashi pseudometric on $V$.
\end{notation}

For the foundation of hyperbolic complex spaces,
see the books \cite{Kobayashi98,Kobayashi05}.

\subsection{A Lehto--Virtanen-type theorem and a big Picard-type theorem}
The following is a generalization of Lehto--Virtanen \cite{LV57Fennicae}.

\begin{mainth}[a Lehto--Virtanen-type theorem]\label{th:HeinonenRossi}
Let $V$ be a complex space equipped with a metric $\delta$
inducing the topology of $V$,
and $f:\bD\setminus\{0\}\to V$ be a holomorphic mapping
having an isolated essential singularity at the origin.
If $\bigcap_{r>0}\overline{f(\bD(r)\setminus\{0\})}\neq\emptyset$,
then there exists
a sequence $(z_n)$ in $\bD\setminus\{0\}$ tending to
$0$ as $n\to\infty$ such that $\lim_{n\to\infty}f(z_n)$ exists
in $V$ and that $\liminf_{n\to 0}\diam_{\delta}(f(\partial\bD(|z_n|)))>0$.
\end{mainth}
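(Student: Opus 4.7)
The plan is to proceed by contradiction: assume that for every sequence $(z_n)$ in $\bD \setminus \{0\}$ with $z_n \to 0$ and $\lim_n f(z_n)$ existing in $V$, one has $\liminf_n \diam_\delta(f(\partial \bD(|z_n|))) = 0$. The goal is to deduce that $f$ extends holomorphically across $0$, contradicting the essential singularity assumption. Pick $p \in C(f,0) := \bigcap_{r>0}\overline{f(\bD(r)\setminus\{0\})}$ (nonempty by hypothesis) and a sequence $\zeta_n \to 0$ with $f(\zeta_n) \to p$. Applying the contradictory hypothesis to $(\zeta_n)$ and extracting, we may assume the radii $r_n := |\zeta_n|$ satisfy $\diam_\delta(f(\partial \bD(r_n))) \to 0$; since $\zeta_n \in \partial \bD(r_n)$ and $f(\zeta_n) \to p$, this forces $f(\partial \bD(r_n))$ to converge to $\{p\}$ in the Hausdorff distance induced by $\delta$.

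The core step is a Riemann-extension argument on descending annular shells starting from $\partial \bD(r_n)$. Choose a holomorphic chart $(U, \psi)$ at $p$ with $\psi : U \hookrightarrow \bC^N$ a closed analytic embedding, $\psi(p) = 0$, and $\overline{U}$ compact in $V$. For each $n$ large enough that $\partial \bD(r_n) \subset f^{-1}(U)$, set $a_n := \inf\{s \in (0, r_n] : \partial \bD(\rho) \subset f^{-1}(U) \text{ for all } \rho \in [s, r_n]\}$. If $a_n = 0$ for some $n$, then $\psi \circ f$ is a bounded holomorphic map $\bD(r_n) \setminus \{0\} \to \bC^N$, extending by Riemann's removable singularity theorem to $\bD(r_n) \to \bC^N$ with image in the closed analytic set $\psi(V)$, so $f$ itself extends holomorphically across $0$, contradicting the essential singularity. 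Otherwise, for all large $n$, $a_n > 0$; since $f^{-1}(U)$ is open, $\partial \bD(a_n) \not\subset f^{-1}(U)$, so there exists $z_n^* \in \partial \bD(a_n)$ with $f(z_n^*) \in \partial U$ (by continuity). By compactness of $\overline{U}$, after extraction $f(z_n^*) \to q \in \partial U$, so $q \neq p$, and since $|z_n^*| = a_n \leq r_n \to 0$ we have $q \in C(f, 0)$.

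Applying the contradictory hypothesis to $(z_n^*)$ and extracting further, we obtain radii $s_m \to 0$ with $f(\partial \bD(s_m))$ Hausdorff-converging to $\{q\}$. Interleaving, consider annuli $\{s_m < |z| < r_n\}$ whose boundary circles are mapped by $f$ near $p$ (outer) and near $q$ (inner). Combining the continuity of $f$ along radial rays $\rho \mapsto f(\rho e^{i\theta})$, the Hausdorff-continuity of $\rho \mapsto f(\partial \bD(\rho))$, and an intermediate-value argument for $\delta(f(\cdot), p)$, one extracts radii $\rho_k \to 0$ on which $f(\partial \bD(\rho_k))$ contains both a point $\delta$-close to $p$ and one at distance at least $\delta(p, q)/2$ from $p$, giving $\diam_\delta(f(\partial \bD(\rho_k))) \geq \delta(p, q)/2$; choosing $z_k \in \partial \bD(\rho_k)$ with $f(z_k)$ convergent yields the desired contradiction. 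The main obstacle is precisely this extraction: \emph{a priori} each individual circle in the transition region could have small $\delta$-diameter while the full annular image spans from near $p$ to near $q$, so isolating a single circle of large diameter requires a careful topological argument using the Hausdorff-continuity of $\rho \mapsto f(\partial \bD(\rho))$ to show that the set of ``small-image'' radii cannot form a closed interval straddling the transition from $p$ to $q$.
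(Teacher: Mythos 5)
Your setup matches the paper's quite closely: you descend from the shrinking circles near $p$ to find the last radius $a_n$ at which $f(\partial\bD(\cdot))$ stays inside a chart around $p$, use Riemann's removable singularity theorem to rule out $a_n=0$, extract a second cluster value $q\ne p$ on $\partial U$, and (by the contradiction hypothesis) arrange that the circles through $z_n^*$ also Hausdorff--converge to $\{q\}$. This is exactly the paper's construction of $a=\lim f(z_n)$, the radii $r_n'$, and $b=\lim f(z_n')\in\overline{W'}\setminus W'$.

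The gap is exactly where you flag it, and it is genuine: the ``careful topological argument using the Hausdorff-continuity of $\rho\mapsto f(\partial\bD(\rho))$'' cannot work. For a merely continuous map it \emph{is} possible for every circle $f(\partial\bD(\rho))$ in the transition annulus to have small $\delta$-diameter while the family of circles sweeps continuously from near $p$ to near $q$; nothing in Hausdorff-continuity or an intermediate-value argument along radial rays forbids this. What makes the scenario impossible is holomorphicity, and that is precisely what the paper invokes: it chooses an affine coordinate $w=(w_1,\dots,w_d)$ on the ambient $\bC^d$-chart with $w_1(a)=0$ and $w_1(b)\ne0$, sets $f_1=w_1\circ f$, picks a value $f_1(y_0)$ attained in the annulus $\bD(|z_n|)\setminus\overline{\bD(|z_n'|)}$ but lying outside both small disks $\overline{\bD(|w_1(b)|/3)}$ and $\overline{\bD(w_1(b),|w_1(b)|/3)}$ containing the two boundary images, and then applies the residue theorem/argument principle: the integral of $f_1'/(f_1-f_1(y_0))$ over the oriented boundary of the annulus must be $\ge1$ since the value is attained, yet each boundary-circle contribution vanishes because the image curves do not wind around $f_1(y_0)$. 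This contradiction is complex-analytic, not topological; without it (or an equivalent such as a winding-number argument for the composed coordinate function) your proof does not close, even though every preceding step is sound. To complete your argument, replace the proposed Hausdorff-continuity extraction with the residue/argument-principle step applied to a single coordinate function separating $p$ from $q$, as in the paper.

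One smaller point: from ``$\overline U$ compact in $V$'' and ``$\psi:U\hookrightarrow\bC^N$ a closed analytic embedding'' it does not automatically follow that $\psi(U)$ is bounded in $\bC^N$ (the chart map need not extend to $\overline U$). You should, as the paper does, pass to a relatively compact subdomain $W'\Subset W$ inside the chart so that $\psi(W')$ is bounded, and phrase the Riemann-extension step with $W'$. This is easily repaired and does not affect the structure, but as written the boundedness claim is not justified.
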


\begin{remark}
 There always exists a metric $\delta$ on $V$ inducing the topology of $V$.
 When $V$ is Kobayashi hyperbolic, we can set $\delta=d_V$ 
 in Theorem \ref{th:HeinonenRossi}.
\end{remark}

In this article,
we do {\itshape not} require the relative compactness of $Y$ in $Z$
in the following definition.
  
\begin{definition}[a hyperbolically imbedded complex subspace]
 A complex subspace $Y$ in a complex space $Z$ is called a 
 hyperbolically imbedded complex subspace in $Z$ if  
 for every distinct points $p,q\in\overline{Y}$, there exist
 open neighborhoods $U_p,U_q$ of $p,q$ in $Z$, respectively, such that
 $d_Y(U_p\cap Y,U_q\cap Y)>0$.
\end{definition}

The following, which will be established as an application of
Theorem \ref{th:HeinonenRossi},
slightly generalizes Kobayashi {\cite[Theorem VII.3.6]{Kobayashi05}}.
 
\begin{mainth}[a big Picard-type theorem]\label{th:PicardKobayashi}
 Let $Y$ be a hyperbolically imbedded complex subspace in a complex space $Z$,
and $f:\bD\setminus\{0\}\to Y$ be a holomorphic mapping. 
 If $\bigcap_{r>0}\overline{f(\bD(r)\setminus\{0\})}\neq\emptyset$
 $($as a subset in $Z)$,
then $f$ extends to a holomorphic mapping from $\bD$ to $Z$.
\end{mainth}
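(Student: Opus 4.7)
The plan is to argue by contradiction: assume $f$, regarded as a map into $Z$, has an isolated essential singularity at the origin. Fix a metric $\delta$ on $Z$ inducing the topology of $Z$. By the hypothesis $\bigcap_{r>0}\overline{f(\bD(r)\setminus\{0\})} \neq \emptyset$, Theorem \ref{th:HeinonenRossi} applied with $V = Z$ produces, along a subsequence, $z_n \to 0$ in $\bD\setminus\{0\}$, a point $p \in Z$ with $f(z_n) \to p$, and a constant $c > 0$ such that $\diam_{\delta}(f(\partial\bD(|z_n|))) \geq c$ for all $n$.

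For the second step I would produce a cluster point on the critical circles distinct from $p$. Choose a compact neighborhood $K$ of $p$ of $\delta$-diameter smaller than $c$; for $n$ large, $f(z_n)$ lies in the interior of $K$, whereas $f(\partial\bD(|z_n|))$ is a connected set of $\delta$-diameter at least $c$ and so cannot be contained in $K$. Picking $a_n \in \partial\bD(|z_n|)$ with $f(a_n) \in \partial K$ and using the compactness of $\partial K$ to pass to a further subsequence, I obtain $f(a_n) \to q$ for some $q \in \partial K$. Then $p, q \in \overline{Y} \subset Z$ and $q \neq p$ since $p$ lies in the interior of $K$.

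For the final step I would invoke the hyperbolic imbedding of $Y$ in $Z$ to reach a contradiction. It provides open neighborhoods $U_p, U_q \subset Z$ of $p, q$ with $d_Y(U_p \cap Y, U_q \cap Y) \geq \varepsilon$ for some $\varepsilon > 0$; hence $d_Y(f(z_n), f(a_n)) \geq \varepsilon$ for large $n$. On the other hand, the hyperbolic (Kobayashi) metric on $\bD\setminus\{0\}$ is given by $|\rd z|/(|z|\log(1/|z|))$, so the Kobayashi length of the circle $\partial\bD(|z_n|)$ equals $2\pi/\log(1/|z_n|)$ and in particular $\diam_{d_{\bD\setminus\{0\}}}(\partial\bD(|z_n|)) \to 0$. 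The distance-decreasing property of $f$ therefore yields $d_Y(f(z_n), f(a_n)) \leq d_{\bD\setminus\{0\}}(z_n, a_n) \to 0$, a contradiction.

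The step I expect to be most delicate is the second one: producing a cluster point $q \in \overline{Y}$ distinct from $p$ when $Y$ is not assumed relatively compact in $Z$. The oscillation lower bound of Theorem \ref{th:HeinonenRossi} is exactly what forces the images $f(\partial\bD(|z_n|))$ to meet the boundary of a small compact neighborhood of $p$, pinning $q$ in $\overline{Y}$ and making the (non-relatively-compact) hyperbolic imbedding hypothesis directly applicable.
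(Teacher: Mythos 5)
Your proof is correct and is essentially the same argument as the paper's, just organized as a direct contradiction rather than in contrapositive form: both invoke Theorem \ref{th:HeinonenRossi} with the metric on $Z$, both extract a second cluster point $q$ (the paper's $b$) on the critical circles by trapping the images against the boundary of a small relatively compact neighborhood of $p$, and both finish by playing the non-increasing property of the Kobayashi pseudometric and the vanishing hyperbolic diameter of $\partial\bD(r)$ (cf.\ Example \ref{th:puncture}) against the lower bound from the hyperbolic imbedding hypothesis.
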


A Kobayashi hyperbolic complex space is a hyperbolically imbedded
complex subspace in itself.
Hence Theorem \ref{th:PicardKobayashi} is a generalization
of Kwack \cite{Kwack69}, which can be restated as follows.

\begin{theorem}[{Kwack \cite{Kwack69}}]\label{th:Picard}
 If there is a holomorphic mapping from $\bD\setminus\{0\}$
 to a complex space $V$ having an isolated
 essential singularity at the origin and satisfying
 $\bigcap_{r>0}\overline{f(\bD(r)\setminus\{0\})}\neq\emptyset$,
 then $V$ is not Kobayashi hyperbolic. 
\end{theorem}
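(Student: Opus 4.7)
The plan is to argue by contradiction, assuming that $V$ is Kobayashi hyperbolic, and to derive a contradiction by combining Theorem~\ref{th:HeinonenRossi} with the distance-decreasing property of the Kobayashi pseudometric.

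Suppose, to the contrary, that $V$ is Kobayashi hyperbolic. Then $d_V$ is a genuine metric on $V$ that induces the topology of $V$, so the hypotheses of Theorem~\ref{th:HeinonenRossi} are met with $\delta=d_V$. Applying that theorem, I would obtain a sequence $(z_n)$ in $\bD\setminus\{0\}$ with $z_n\to 0$ such that $\lim_{n\to\infty}f(z_n)$ exists in $V$ and
\[
\liminf_{n\to\infty}\diam_{d_V}\bigl(f(\partial\bD(|z_n|))\bigr)>0.
\]

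Next I would exploit the distance-decreasing property of the Kobayashi pseudometric under the holomorphic mapping $f:\bD\setminus\{0\}\to V$, which yields
\[
\diam_{d_V}\bigl(f(\partial\bD(|z_n|))\bigr)
\le \diam_{d_{\bD\setminus\{0\}}}\bigl(\partial\bD(|z_n|)\bigr)
\]
for every $n$. To finish, I need the standard fact that the Kobayashi (equivalently Poincar\'e) diameter of the circle $\partial\bD(r)$ inside $\bD\setminus\{0\}$ tends to $0$ as $r\to 0$. This follows from the explicit formula for the hyperbolic density on $\bD\setminus\{0\}$ (of order $(|z|\log(1/|z|))^{-1}$ near the origin), which shows that the hyperbolic length of $\partial\bD(r)$ is of order $1/\log(1/r)$ and hence vanishes as $r\to 0$. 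Therefore the right-hand side above tends to $0$ along the sequence $(z_n)$, contradicting the conclusion of Theorem~\ref{th:HeinonenRossi}.

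Thus $V$ cannot be Kobayashi hyperbolic, which proves the theorem. The only non-routine ingredient is the asymptotic vanishing of the Kobayashi diameter of small concentric circles in the punctured disk, but this is a classical computation and presents no real obstacle; the conceptual heart of the argument is already packaged in Theorem~\ref{th:HeinonenRossi}, which produces circles whose $\delta$-diameters do not collapse, in direct conflict with the distance-decreasing property.
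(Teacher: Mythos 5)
Your proof is correct: with $V$ Kobayashi hyperbolic, $d_V$ is a metric inducing the topology (Fact~\ref{th:topology}), so Theorem~\ref{th:HeinonenRossi} may be applied with $\delta=d_V$; the non-increasing property (Fact~\ref{th:decreasing}) then gives $\diam_{d_V}\bigl(f(\partial\bD(|z_n|))\bigr)\le\diam_{d_{\bD\setminus\{0\}}}\bigl(\partial\bD(|z_n|)\bigr)\to 0$ (by Example~\ref{th:puncture}), which clashes with the non-collapsing conclusion $\liminf_n\diam_{d_V}\bigl(f(\partial\bD(|z_n|))\bigr)>0$. The route differs slightly from the paper's: the paper treats Theorem~\ref{th:Picard} as an immediate corollary of the big Picard-type Theorem~\ref{th:PicardKobayashi} (a Kobayashi hyperbolic space is hyperbolically imbedded in itself), and the proof of \ref{th:PicardKobayashi} uses \ref{th:HeinonenRossi} with an arbitrary auxiliary metric $\delta$ on $Z$, then must extract two distinct accumulation points $a\ne b$ from the non-collapsing circles before invoking the same hyperbolic-circle collapse to contradict hyperbolic imbeddedness. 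By taking $\delta=d_V$ from the outset --- exactly the specialization anticipated in the Remark following Theorem~\ref{th:HeinonenRossi} --- you bypass the construction of $a$ and $b$ entirely: the non-collapsing conclusion of \ref{th:HeinonenRossi} in the $d_V$-metric is directly incompatible with the distance-decreasing property plus the vanishing of hyperbolic circle diameters. So the conceptual ingredients are identical, but your argument is a streamlined direct shot at \ref{th:Picard}, whereas the paper buys the more general Theorem~\ref{th:PicardKobayashi} at the cost of a bit more bookkeeping.
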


\begin{remark}
 The corresponding little Picard-type theorem, which does not require
 the assumption $\bigcap_{r>0}\overline{f(\bD(r)\setminus\{0\})}\neq\emptyset$,
 follows from the {\itshape definition} of
 the Kobayashi hyperbolicity: see Fact \ref{th:little} below.
\end{remark}

\subsection{A rescaling principle and a big Brody-type theorem}

When a complex space is compact, it admits a nice metric
(see Theorem \ref{th:Yamanoi}).

Theorem \ref{th:HeinonenRossi} together with
a Zalcman-type theorem (see Theorem \ref{th:ZB} or \ref{th:Zalcman}) also 
establishes the following rescaling principle for an isolated essential singularity
of a holomorphic curve.

\begin{mainth}[a rescaling principle]\label{th:rescaling}
Let $V$ be a compact complex space
equipped with a metric satisfying the conditions in Theorem $\ref{th:Yamanoi}$, and
$f:\bD\setminus\{0\}\to V$ be a holomorphic mapping. Then, 
$f$ has an isolated essential singularity at the origin if and only if
there are 
sequences $(z_k)$ and $(\rho_k)$ 
in $\bC$ and $(0,\infty)$, respectively, 
and a non-constant holomorphic mapping $g:X\to V$, 
where $X$ is either $\bC$ or $\bC\setminus \{0\}$, 
such that 
$\lim_{k\to\infty}z_k=0$ on $\bC$, that $\lim_{k\to\infty}\rho_k=0$ in $\bR$, and
that $\lim_{k\to\infty}f(z_k+\rho_k v)= g(v)$
locally uniformly on $X$. 
\end{mainth}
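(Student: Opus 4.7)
I would prove the two implications separately; the ``if'' direction is essentially immediate, while the ``only if'' direction is the substantial one and will rest on combining Theorem~\ref{th:HeinonenRossi} with a Zalcman--Brody rescaling.

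\textbf{The ``if'' direction.} Suppose, for contradiction, that $f$ extends to a holomorphic $\tilde f:\bD\to V'$ into some complex space $V'$, while sequences $z_k\to 0$ in $\bC$, $\rho_k\to 0^+$, and a non-constant holomorphic $g:X\to V$ with $f(z_k+\rho_k v)\to g(v)$ locally uniformly on $X$ are nonetheless given. For each fixed $v\in X$ the point $z_k+\rho_k v$ tends to $0$ in $\bD$, so continuity of $\tilde f$ at $0$ forces $f(z_k+\rho_k v)=\tilde f(z_k+\rho_k v)\to \tilde f(0)$ in the topology of $V'$. Since the topologies of $V$ and $V'$ agree near the limit, one concludes $g\equiv\tilde f(0)$, contradicting non-constancy.

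\textbf{The ``only if'' direction.} Assume $f$ has an isolated essential singularity at $0$. Compactness of $V$ makes $\bigcap_{r>0}\overline{f(\bD(r)\setminus\{0\})}$ a nested intersection of non-empty compact sets, hence non-empty, and Theorem~\ref{th:HeinonenRossi} applied with the metric $\delta$ provided by Theorem~\ref{th:Yamanoi} yields $(z_n)\subset\bD\setminus\{0\}$ with $z_n\to 0$, $f(z_n)\to p\in V$, and $\liminf_n\diam_\delta f(\partial\bD(|z_n|))\geq c$ for some $c>0$. Set $r_n:=|z_n|$ and rescale to the fixed annulus $A:=\{1/2<|w|<2\}$ by $F_n(w):=f(r_nw)$. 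The diameter bound produces points $\alpha_n,\beta_n\in\partial\bD(1)$ with $\delta(F_n(\alpha_n),F_n(\beta_n))\geq c$; combining with $F_n(z_n/r_n)=f(z_n)\to p$ and the triangle inequality, one of the two points, say $\beta_n$, satisfies $\delta(F_n(\beta_n),p)\geq c/2$ for all $n$.

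The key step is to deduce that $(F_n)$ is \emph{not} equicontinuous at some $w^\ast\in\partial\bD(1)\subset A$. If it were equicontinuous on a neighborhood of every point of $\partial\bD(1)$, an Ascoli extraction would give $F_n\to F_\infty$ locally uniformly on (a subset of) $A$; combined with the rotational freedom built into the rescaling $w\mapsto r_n w$ and the essential-singularity hypothesis, such an $F_\infty$ would act as a single-valued boundary value for $f$ at $0$, allowing one to patch $f$ to a holomorphic extension across the origin and contradicting essentiality. Once non-equicontinuity at $w^\ast$ is in hand, the Brody--Zalcman rescaling principle (Theorem~\ref{th:ZB} or~\ref{th:Zalcman}) furnishes a subsequence $n_k$, centers $w_k\to w^\ast$ in $A$, and scales $\sigma_k\to 0^+$ such that $F_{n_k}(w_k+\sigma_k v)$ converges locally uniformly on $X\in\{\bC,\bC\setminus\{0\}\}$ to a non-constant holomorphic $g:X\to V$. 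Unwinding $F_{n_k}(w_k+\sigma_k v)=f(r_{n_k}w_k+r_{n_k}\sigma_k v)$ and setting $\zeta_k:=r_{n_k}w_k\to 0$ and $\tau_k:=r_{n_k}\sigma_k\to 0^+$, we obtain $f(\zeta_k+\tau_k v)\to g(v)$ locally uniformly on $X$, which is the required rescaling.

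The main obstacle is the non-equicontinuity step: converting the diameter lower bound supplied by Theorem~\ref{th:HeinonenRossi} into genuine failure of equicontinuity of the dilated family $(F_n)$ at a single point of $A$, and doing so in a way that genuinely uses the isolated essential singularity hypothesis (not merely compactness of $V$). The two points $\alpha_n,\beta_n$ on $\partial\bD(1)$ witnessing the definite image-diameter need not cluster at a common limit, so the anchoring information $F_n(z_n/r_n)\to p$ and a careful patching/rotation argument against the putative limit $F_\infty$ must be combined to close the case. Once that is cleared, the Brody--Zalcman machine delivers the conclusion mechanically.
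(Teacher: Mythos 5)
Your ``if'' direction is fine (and essentially the paper's: if $f$ extended holomorphically across $0$, then $f(z_k+\rho_k v)\to f(0)$ for every $v$, forcing $g$ constant).

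The ``only if'' direction, as you yourself suspect, has a genuine gap, and it is not repairable in the form you set it up. You want to deduce from the diameter bound of Theorem~\ref{th:HeinonenRossi} that the dilated family $F_n(w):=f(r_nw)$ fails to be equicontinuous somewhere on the annulus, so that a Zalcman-type rescaling applies and produces a non-constant $g:\bC\to V$. But this implication is simply false: the family $(F_n)$ can perfectly well be normal on $\bC\setminus\{0\}$ and still satisfy $\diam_\delta F_n(\partial\bD(1))\ge c>0$ for all $n$ (this happens exactly for the holomorphic curves that are \emph{exceptional in Julia's sense}, cf.\ Remark~\ref{th:Julia} and Section~\ref{sec:Julia}). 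Your heuristic that a locally uniform limit $F_\infty$ ``would act as a single-valued boundary value for $f$ at $0$'' and allow you to extend $f$ holomorphically across $0$ does not hold: different subsequential limits along different radii $r_n\to 0$ need not agree, and the essential singularity persists. So the non-equicontinuity step cannot be closed.

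The correct move is not to force $X=\bC$, but to embrace the normal case and there take the limit directly, producing $g$ on $X=\bC\setminus\{0\}$ with $z_k\equiv 0$, $\rho_k=r_k$. Concretely, the paper splits at the very start on whether $\limsup_{z\to 0}L_{f,\bD(z,|z|/2)}$ is finite. If it is infinite, one never invokes Theorem~\ref{th:HeinonenRossi}: the Lipschitz constants $L_{f,\bD(y_k,|y_k|/2)}$ blow up, so after an affine rescaling to unit disks the family is not normal (Marty-type theorem), and the original Zalcman lemma (Theorem~\ref{th:Zalcman}) supplies the double-rescaled non-constant $g:\bC\to V$. If the $\limsup$ is finite, then \eqref{eq:invariance} shows the dilated family $(F_n)$ has locally bounded Lipschitz constants on $\bC\setminus\{0\}$, hence is normal there; one then uses Theorem~\ref{th:HeinonenRossi} only to get the diameter lower bound that makes the subsequential limit $g:\bC\setminus\{0\}\to V$ non-constant. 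Your steps (rescale by $r_n$, invoke Theorem~\ref{th:HeinonenRossi}, use the diameter bound to preclude constancy) are exactly the ingredients of this second case; you just need to stop trying to prove non-normality there and instead perform the $\limsup L_{f,\bD(z,|z|/2)}$ dichotomy \emph{before} invoking Theorem~\ref{th:HeinonenRossi}.
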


\begin{remark}
Originally, similar results to Theorems \ref{th:HeinonenRossi} 
and \ref{th:rescaling} have been established in 
\cite[Lemma 3.1,\ Theorem 1]{OPrescaling} 
for quasiregular mappings from a punctured ball to a (compact) Riemannian manifold 
having an isolated essential singularity at the puncture, 
and are applied not only to deduce ``big'' results from 
their corresponding ``little'' ones (e.g., Holopainen--Pankka's big Picard-type
theorem \cite{HP05} from Holopainen--Rickman's little one
\cite{HR92} for quasiregular mappings when the target is compact) but also to
establish the density of repelling periodic points in the Julia set
for {\itshape local uniformly quasiregular} dynamics 
including complex dynamics \cite{OPrepelling}. 
\end{remark}

\begin{remark}[a holomorphic mapping exceptional in Julia's sense]\label{th:Julia}
 In Theorem \ref{th:rescaling},
 when $V$ is a compact Hermitian complex manifold equipped with an Hermitian metric $\delta_V$,
 the sequence $(z_k)$ can identically equal $0$ (and then $X=\bC\setminus\{0\}$) if
 the mapping $f$ is {\itshape exceptional in Julia's sense} 
 in that
 \begin{gather*}
 \limsup_{z\to 0}|z|f^\#(z)<\infty,
 \end{gather*}
 where we set $f^\#(z):=\lim_{w\to z}\delta_V(f(z),f(w))/|z-w|$ 
 on $\bD\setminus\{0\}$.
 Conversely, if $\limsup_{z\to 0}|z|f^\#(z)=\infty$, then 
 the case $X=\bC$ can occur. 
 Some examples of both non-exceptional $f$ and exceptional $f$
 in Julia's sense have been known and 
studied in the Nevanlinna theory: see Lehto--Virtanen
 \cite{LV57Fennicae}.
\end{remark}

When $V$ is compact, the following improvement of Theorem \ref{th:Picard}
immediately follows from Theorem \ref{th:rescaling} 
(see also Remark \ref{th:Brody}). 

\begin{corollary}[a big Brody-type theorem]\label{th:bigBrody}
If there is a holomorphic mapping from $\bD\setminus\{0\}$
to a compact complex space $V$ having an isolated
essential singularity at the origin,
then $V$ is not Brody hyperbolic. 
\end{corollary}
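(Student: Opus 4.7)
The plan is to invoke the rescaling principle (Theorem \ref{th:rescaling}) directly and then reduce the two possible shapes of the domain $X$ to a single non-constant entire curve in $V$.

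First, since $V$ is compact, Theorem \ref{th:Yamanoi} (as referenced in the text) supplies a metric on $V$ of the form required in Theorem \ref{th:rescaling}. Given a holomorphic mapping $f:\bD\setminus\{0\}\to V$ with an isolated essential singularity at the origin, Theorem \ref{th:rescaling} then produces sequences $(z_k)$ in $\bC$ with $z_k\to 0$ and $(\rho_k)$ in $(0,\infty)$ with $\rho_k\to 0$, together with a non-constant holomorphic mapping $g:X\to V$ where $X\in\{\bC,\,\bC\setminus\{0\}\}$, such that $f(z_k+\rho_k v)\to g(v)$ locally uniformly on $X$.

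If $X=\bC$, then $g:\bC\to V$ is itself a non-constant entire curve in $V$, so $V$ fails to be Brody hyperbolic and we are done. If instead $X=\bC\setminus\{0\}$, I would compose $g$ with the exponential map: the mapping
\begin{equation*}
g\circ\exp:\bC\to V
\end{equation*}
is holomorphic, and it is non-constant because $\exp:\bC\to\bC\setminus\{0\}$ is surjective and $g$ is non-constant. Thus in either case we obtain a non-constant holomorphic mapping from $\bC$ into $V$, contradicting the Brody hyperbolicity of $V$.

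There is essentially no obstacle here: the content has already been packed into Theorem \ref{th:rescaling}, and the only minor point is that the case $X=\bC\setminus\{0\}$ must be converted to an entire curve, which the universal cover $\exp:\bC\to\bC\setminus\{0\}$ does for free. The argument therefore amounts to two lines after the rescaling theorem is applied.
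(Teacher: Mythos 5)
Your proof is correct and follows exactly the route the paper intends: apply Theorem \ref{th:rescaling} to obtain a non-constant $g:X\to V$ with $X\in\{\bC,\bC\setminus\{0\}\}$, and then pass to an entire curve when $X=\bC\setminus\{0\}$ via $g\circ\exp$. This last step is precisely the content of Remark \ref{th:Brody}, which the paper cites alongside Theorem \ref{th:rescaling} when stating that Corollary \ref{th:bigBrody} follows immediately.
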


\begin{remark}
Corollary \ref{th:bigBrody} 
is also a consequence of Theorem \ref{th:Picard}
and the equivalence between the Kobayashi hyperbolicity and the Brody one
for compact complex spaces, the latter of which is known as 
Brody's theorem \cite{Brody78}. See Fact \ref{th:little} and Remark \ref{th:Brodyproof} below.  
\end{remark}

\subsection{Organization of this article}
In Section \ref{sec:background}, we gather some background materials
from the Kobayashi hyperbolic geometry
as well as a proof of a Zalcman-type theorem (Theorem \ref{th:ZB}) using
the original Zalcman's lemma (Theorem \ref{th:Zalcman})
in the setting that $V$ is not necessarily an Hermitian
compact complex manifold,
which we hope might be of independent
interest.
We show Theorems \ref{th:HeinonenRossi}, \ref{th:PicardKobayashi},
and \ref{th:rescaling} in
Sections \ref{sec:LV}, \ref{sec:Picard}, and \ref{sec:rescaling}, respectively.
Section \ref{sec:Julia} is
devoted to some details on Remark \ref{th:Julia}.

\section{Background}\label{sec:background}

Let $V$ be a complex space.
Let us recall the definition of the Kobayashi pseudometric $d_V$ on $V$
(cf.\ \cite[\S VII]{Kobayashi05} or \cite[\S 2]{Yamanoisurvey}).

\begin{definition}[the Kobayashi pseudometric $d_V$ on $V$]\label{th:definitionKobayashi}
 For every $p,q\in V$,
 a chain of holomorphic disks from $p$ to $q$ is 
 a pair of a finite sequence $(f_j)_{j=1}^m$ of holomorphic mappings from $\bD$
 to $V$ and a finite sequence $((a_j,b_j))_{j=1}^m$ in $\bD\times\bD$ such that
 $f_1(a_1)=p$, $f_m(b_m)=q$ and that for every $j\in\{1,2,\ldots,m-1\}$,
 $f_j(b_j)=f_{j+1}(a_{j+1})$ (the $m\in\bN$ depends on each
 chain of holomorphic disks from $p$ to $q$).
 Let $\rho_{\bD}$ be the Poincar\'e (, Bergman or hyperbolic) metric on $\bD$. Then
 $d_V(p,q)$ is defined by the infimum
 of the sum $\sum_{j=1}^m\rho_{\bD}(a_j,b_j)$, where the infimum is taken over
 all chains $((f_j)_{j=1}^m,((a_j,b_j))_{j=1}^m)$
 of holomorphic disks from $p$ to $q$.
\end{definition}

\begin{example}\label{th:disk}
 For every $a\in\bC$ and every $R>0$,
 the Kobayashi pseudometric $d_{\bD(a,R)}$ on $\bD(a,R)$ coincides with the 
 Poincar\'e metric $\rho_{\bD(a,R)}$ on $\bD(a,R)$,
 which is a K\"ahler metric given by $\rd s_{\bD(a,R)}=R|\rd z|/(R^2-|z-a|^2)$
 on $\bD(a,R)$. The following comparison is useful:
 for every $a\in\bC$, every $R>0$, every $r\in(0,R)$,
 and every $z,w\in\overline{\bD(a,r)}$,
\begin{gather}
 \frac{1}{R}|z-w|\le d_{\bD(a,R)}(z,w)
 \le\frac{R}{R^2-r^2}|z-w|.\label{eq:comparable}
\end{gather}
\end{example}

\begin{example}[{cf.\ \cite[Propositions IV.1.1 and VI.2.1]{Kobayashi05}}]\label{th:puncture} 
The Kobayashi pseudometric $d_{\bD\setminus\{0\}}$ on $\bD\setminus\{0\}$
coincides with the hyperbolic metric on $\bD\setminus\{0\}$,
which is a K\"ahler metric 
given by $\rd s_{\bD\setminus\{0\}}=|\rd z|/(-|z|\log|z|)$
on $\bD\setminus\{0\}$.
In particular, the arc length of the circle $\partial\bD(r)$ with respect to
$d_{\bD\setminus\{0\}}$ is $O(1/(-\log r))$ as $r\to 0$.
\end{example}

\begin{fact}\label{th:decreasing}
The Kobayashi pseudometrics on complex spaces 
enjoy the {\itshape non-increasing} property 
under holomorphisms in that for complex spaces $X,Y$,
a holomorphic mapping $f:X\to Y$, and points $x,x'\in X$,
\begin{gather*}
 d_Y(f(x),f(x'))\le d_X(x,x'). 
\end{gather*}
In particular,
the Kobayashi pseudometrics are invariant under biholomorphisms
between complex spaces.
\end{fact}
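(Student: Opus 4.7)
The plan is to read this directly off Definition \ref{th:definitionKobayashi}. Given $x,x'\in X$, I would take an arbitrary chain of holomorphic disks $((f_j)_{j=1}^m,((a_j,b_j))_{j=1}^m)$ from $x$ to $x'$ in $X$, and then post-compose with $f$ to produce a candidate chain from $f(x)$ to $f(x')$ in $Y$. Concretely, set $g_j:=f\circ f_j:\bD\to Y$ for each $j$; each $g_j$ is holomorphic because $f$ and $f_j$ are, and the linking conditions defining a chain transfer verbatim: $g_1(a_1)=f(f_1(a_1))=f(x)$, $g_m(b_m)=f(f_m(b_m))=f(x')$, and $g_j(b_j)=f(f_j(b_j))=f(f_{j+1}(a_{j+1}))=g_{j+1}(a_{j+1})$ for each $j\in\{1,\ldots,m-1\}$.

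Crucially, the parameters $(a_j,b_j)$ of the new chain are unchanged, so the associated sum $\sum_{j=1}^m\rho_{\bD}(a_j,b_j)$ is identical to the one computed for the original chain in $X$. Taking the infimum defining $d_Y(f(x),f(x'))$ over all chains in $Y$ — a set which contains the push-forwards of all chains in $X$ from $x$ to $x'$ — immediately yields
\[
 d_Y(f(x),f(x'))\le\sum_{j=1}^m\rho_{\bD}(a_j,b_j),
\]
and taking the infimum on the right over all chains from $x$ to $x'$ in $X$ gives $d_Y(f(x),f(x'))\le d_X(x,x')$.

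For the second assertion, I would apply the non-increasing property just proved to both $f$ and its inverse $f^{-1}:Y\to X$, which is holomorphic by hypothesis. This yields $d_Y(f(x),f(x'))\le d_X(x,x')$ and $d_X(x,x')=d_X(f^{-1}(f(x)),f^{-1}(f(x')))\le d_Y(f(x),f(x'))$, hence equality.

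There is no genuine obstacle here: the entire argument is a direct unpacking of the definition, and the only point to verify is that post-composition with $f$ preserves the chain-of-holomorphic-disks structure, which it does automatically because $f$ is holomorphic and the parameters $(a_j,b_j)$ play no role in the ambient space.
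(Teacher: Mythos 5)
Your proof is correct. The paper states this as a \emph{Fact} without proof (it is standard material, cf.\ Kobayashi's books cited in the paper), and your argument is precisely the canonical one: post-composition of a chain of holomorphic disks with a holomorphic map yields a chain with the same endpoints pushed forward and the identical Poincar\'e sum, so the infimum in the target is no larger than the infimum in the source; applying this to $f$ and $f^{-1}$ gives biholomorphic invariance.
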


\begin{definition}[the Kobayashi hyperbolicity and the Brody one]
 A complex space $V$ is said to be Kobayashi hyperbolic (resp.\ Brody hyperbolic)
 if the Kobayashi pseudometric $d_V$ is a metric on $V$
 (resp.\ if there is no non-constant holomorphic mapping from 
 $\bC$ to $V$). 
\end{definition}

\begin{fact}\label{th:topology}
 If a complex space $V$ is Kobayashi hyperbolic, then $d_V$ induces 
the topology of $V$.
\end{fact}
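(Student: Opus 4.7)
The plan is to show both inclusions of topologies. For showing that every $d_V$-open set is open in the original topology of $V$, it suffices to prove that for each $p\in V$, $d_V(p,q)\to 0$ as $q\to p$ in the original topology; the triangle inequality then implies that every $d_V$-ball $B_{d_V}(p,r)=\{q\in V:d_V(p,q)<r\}$ is open in the original topology. This local upper bound is a standard consequence of the availability of short chains of holomorphic disks near any point of $V$: embedding a neighborhood $U$ of $p$ as a closed analytic subspace of a small polydisk in $\bC^N$, the non-increasing property (Fact \ref{th:decreasing}) gives $d_V\le d_U$ on $U$, and the elementary upper bound in Example \ref{th:disk} then yields $d_V(p,q)\to 0$ as $q\to p$ in $V$. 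Kobayashi hyperbolicity is not yet needed for this direction.

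For the reverse inclusion, Kobayashi hyperbolicity enters in an essential way. Fix $p\in V$ and an original-open neighborhood $W$ of $p$. Since complex spaces are locally compact Hausdorff, I choose a compact neighborhood $K$ of $p$ with $K\subset W$ and $p$ in the topological interior of $K$, and set $r_0:=\inf_{x\in\partial K}d_V(p,x)$. By Kobayashi hyperbolicity, $d_V(p,x)>0$ for every $x\in\partial K$; the local bound from the first step, combined with the triangle inequality, yields the continuity of $x\mapsto d_V(p,x)$ with respect to the original topology, and together with compactness of $\partial K$ this gives $r_0>0$. It then suffices to prove $B_{d_V}(p,r_0)\subset K$, for then $B_{d_V}(p,r_0)\subset W$.

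To prove $B_{d_V}(p,r_0)\subset K$, let $y\in V\setminus K$ and consider any chain $((f_j)_{j=1}^m,((a_j,b_j))_{j=1}^m)$ from $p$ to $y$. Concatenating the images under the $f_j$'s of the Poincar\'e geodesics in $\bD$ from $a_j$ to $b_j$ produces a continuous path $\tilde\gamma:[0,m]\to V$ from $p$ to $y$; since $p$ lies in the interior of $K$ and $y\notin K$, $\tilde\gamma$ must cross $\partial K$, say at $x=f_{j^*}(z^*)$, where $z^*$ lies on the Poincar\'e geodesic from $a_{j^*}$ to $b_{j^*}$. Truncating the chain at $z^*$ yields a chain from $p$ to $x$ whose total $\rho_{\bD}$-length is bounded by that of the original chain (using $\rho_{\bD}(a_{j^*},z^*)\le\rho_{\bD}(a_{j^*},b_{j^*})$); hence the original chain length is at least $d_V(p,x)\ge r_0$. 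Taking the infimum over all chains from $p$ to $y$ gives $d_V(p,y)\ge r_0$, as required. The main subtlety is this topological chain-crossing argument, which hinges on the fact that chains of holomorphic disks can be parametrized as continuous paths through Poincar\'e geodesics, enabling the intermediate-value argument at $\partial K$ that converts Kobayashi hyperbolicity into the desired topological comparison.
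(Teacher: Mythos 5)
The paper states this as a Fact without giving a proof; it is Barth's theorem, standard in hyperbolic complex geometry (cf.\ \cite{Kobayashi98,Kobayashi05}). Your two-sided argument is precisely the standard approach, so there is no alternative route in the paper to compare against, but the proof itself merits review.

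Your second direction is correct and is the key point. Parametrizing the disks of a chain along Poincar\'e geodesics, observing that the concatenated path from $p$ to $y\notin K$ must cross $\partial K$, and truncating at the first crossing to produce a subchain of no greater $\rho_{\bD}$-length is exactly the right mechanism; it cleanly converts the pointwise hyperbolicity hypothesis $d_V(p,x)>0$ on the compact set $\partial K$ into the inclusion $B_{d_V}(p,r_0)\subset K$. The use of the first-direction continuity of $x\mapsto d_V(p,x)$ to get $r_0>0$ is also correct.

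Your first direction, however, has a gap in the sketched justification. From the embedding $U\hookrightarrow V$ you correctly get $d_V\le d_U$ on $U$, but a closed analytic subspace $U$ of a polydisk $P$ only gives the inequality $d_P\le d_U$ in the \emph{wrong} direction; a short holomorphic disk in $P$ through $p$ and $q$ need not lie in $U$, so Example \ref{th:disk} applied to $P$ does not control $d_U$ from above. What is actually needed is the local $\bC$-connectedness of complex spaces: every point has arbitrarily small neighborhoods in which any two points can be joined by a chain of holomorphic disks of small total $\rho_{\bD}$-length. For a complex manifold this is immediate (intersect an affine line with a coordinate ball), and then Example \ref{th:disk} does finish the estimate; for a singular complex space one instead invokes the local parametrization theorem (each irreducible local component is a finite branched cover of a polydisk, and disks can be lifted through the cover). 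The conclusion you want is true and standard, but the polydisk-embedding step as written does not deliver it, and the singular case needs the extra input.
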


\begin{remark}\label{th:Brody}
 For a non-constant holomorphic mapping $g:\bC\setminus\{0\}\to V$, 
 $g\circ\exp:\bC\to V$ is non-constant and holomorphic.
 Conversely, for a non-constant
 holomorphic mapping $g:\bC\to V$, 
 $g|(\bC\setminus\{0\}):\bC\setminus\{0\}\to V$ is  
 non-constant and holomorphic. 
Hence,
{\itshape  a complex space $V$ is Brody hyperbolic if and only if
 there is no non-constant holomorphic mapping from $X$ to $V$,
 where $X$ is either $\bC$ or $\bC\setminus\{0\}$.}
\end{remark}

\begin{fact}[a little Picard-type theorem and Brody's theorem]\label{th:little}
 {\itshape If a complex space $V$ is Kobayashi hyperbolic, then 
 it is also Brody hyperbolic}: this
 ``little Picard-type theorem''is almost by the definition of the Kobayashi
 hyperbolicity and that of the Brody one.

 Brody's theorem asserts that {\itshape the converse
 is also true if in addition $V$ is compact,} that is,
 {\itshape a Brody hyperbolic compact complex space is Kobayashi hyperbolic}.
\end{fact}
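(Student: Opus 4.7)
My plan is to prove the two assertions of Fact~\ref{th:little} separately. The little Picard-type direction is essentially immediate from Fact~\ref{th:decreasing} once one observes $d_{\bC}\equiv 0$, whereas Brody's theorem is the substantive half and proceeds by the classical Brody rescaling argument.

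For the little Picard-type direction, suppose $V$ is Kobayashi hyperbolic and, for contradiction, let $g:\bC\to V$ be a non-constant holomorphic mapping. I first show $d_{\bC}\equiv 0$: given $z,w\in\bC$ and any $R>\max(|z|,|w|)$, the holomorphic map $\zeta\mapsto R\zeta$ from $\bD$ to $\bC$ combined with Fact~\ref{th:decreasing} yields $d_{\bC}(z,w)\le d_{\bD}(z/R,w/R)$, and the right-hand side tends to $0$ as $R\to\infty$ by the explicit form of $\rho_{\bD}$ recorded in Example~\ref{th:disk}. Applying Fact~\ref{th:decreasing} once more to $g$ gives $d_V(g(z),g(w))\le d_{\bC}(z,w)=0$ for all $z,w$, forcing $g$ to be constant since $d_V$ is a metric, a contradiction.

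For Brody's theorem, suppose $V$ is compact and Brody hyperbolic but, for contradiction, not Kobayashi hyperbolic; pick distinct $p_0,q_0\in V$ with $d_V(p_0,q_0)=0$. Equip $V$ with a metric $\delta$ as in Theorem~\ref{th:Yamanoi} and, for a holomorphic $f:\bD\to V$, define $f^{\#}$ as in Remark~\ref{th:Julia}. If the quantity $\sup_{\zeta\in\bD}(1-|\zeta|^2)f^{\#}(\zeta)$ were uniformly bounded by some constant $C$ over all holomorphic $f:\bD\to V$, then integrating along Poincar\'e geodesics would yield $\delta(f(a),f(b))\le C\cdot\rho_{\bD}(a,b)$, and chaining this estimate along any chain of disks from $p_0$ to $q_0$ would give $\delta(p_0,q_0)\le C\cdot d_V(p_0,q_0)=0$, contradicting $p_0\ne q_0$. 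Hence there exists a sequence $(f_n)$ of holomorphic disks in $V$ with $\sup_{\zeta\in\bD}(1-|\zeta|^2)f_n^{\#}(\zeta)\to\infty$.

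I then invoke Brody's reparametrization lemma: choosing $\zeta_n\in\bD$ and $r_n>0$ appropriately, the renormalizations $h_n(w):=f_n(\zeta_n+r_n w)$ satisfy $h_n^{\#}(0)=1$ together with $(1-|w|^2/R_n^2)h_n^{\#}(w)\le 1$ on $\bD(R_n)$ with $R_n\to\infty$. The family $\{h_n\}$ is then equicontinuous on every compact subset of $\bC$, and compactness of $V$ together with Arzel\`a-Ascoli extracts a subsequence converging locally uniformly on $\bC$ to a holomorphic $g:\bC\to V$ with $g^{\#}(0)=1$, contradicting Brody hyperbolicity. The main obstacle is carrying out this reparametrization and normal-family step when $V$ is merely a complex space, possibly singular: arranging a metric $\delta$ on $V$ that both induces the topology and is compatible with holomorphic differentiation, so that $f^{\#}$ is well-behaved and the Arzel\`a-Ascoli limit is automatically holomorphic into $V$, is the delicate point, and is precisely where Theorem~\ref{th:Yamanoi} and the Zalcman-type technology of Section~\ref{sec:background} enter.
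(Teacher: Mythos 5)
Your treatment of the little Picard-type half is correct and simply spells out the definitional argument the paper waves at: $d_\bC\equiv 0$ (by precomposing with $\zeta\mapsto R\zeta$ and letting $R\to\infty$), so the non-increasing property (Fact~\ref{th:decreasing}) forces any $g\in\mathcal{O}(\bC,V)$ to collapse whenever $d_V$ is a genuine metric. Nothing to add there.

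For Brody's theorem you take a genuinely different route from the paper, and it has a real gap. You run the classical Brody reparametrization: extract disks $f_n$ with $\sup_\zeta(1-|\zeta|^2)f_n^\#(\zeta)\to\infty$, rescale to get a Brody curve. This works when $V$ is an Hermitian compact complex \emph{manifold}, because there $f^\#=\lim_{w\to z}\delta_V(f(z),f(w))/|z-w|$ is the pullback of a smooth length element and the estimate ``$\sup(1-|\zeta|^2)f^\#\le C$ implies $\delta(f(a),f(b))\le C\rho_\bD(a,b)$'' follows by integration. But the statement is asserted for an arbitrary compact complex \emph{space}, possibly singular. In that generality the metric $\delta$ furnished by Theorem~\ref{th:Yamanoi} is just a distance, not a Riemannian/Hermitian structure, and the pointwise quantity $f^\#$ need not be well-defined, finite, continuous, or a length element that you can integrate along Poincar\'e geodesics. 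You flag this yourself in your last sentence (``arranging a metric $\delta$\dots so that $f^\#$ is well-behaved\dots is the delicate point, and is precisely where Theorem~\ref{th:Yamanoi} and the Zalcman-type technology\dots enter''), but flagging the obstacle is not the same as overcoming it: the hard work of the statement is precisely this step, and your argument defers it rather than carrying it out.

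The paper avoids the issue entirely in Remark~\ref{th:Brodyproof}. Instead of pointwise derivatives it works with the two-point Lipschitz constants $L_{f,\bD(r)}$ of \eqref{eq:Lipschitz}, whose finiteness for holomorphic disks into a compact complex space is part of the package established in Section~\ref{sec:background}. Concretely: if $V$ is not Kobayashi hyperbolic, pick $p\ne q$ with $d_V(p,q)=0$ and chains concentrated in $\bD(r)$; if $\mathcal{O}(\bD,V)$ were normal then the Marty-type theorem (Theorem~\ref{th:Marty}) would give $C(r):=\sup_f L_{f,\bD(r)}<\infty$, and chaining the resulting $\delta$-Lipschitz bound over a chain of disks yields $\delta(p,q)\le C(r)\,d_V(p,q)=0$, a contradiction. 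Hence $\mathcal{O}(\bD,V)$ is not normal, and Zalcman's lemma (Theorem~\ref{th:Zalcman}) produces a non-constant $g\in\mathcal{O}(\bC,V)$, contradicting Brody hyperbolicity. The $L_{f,\bD(r)}$-based argument buys exactly what your $f^\#$-based argument lacks: it is stated in terms of the Kobayashi metric and the distance $\delta$ alone, so it survives singularities without any appeal to an integrable infinitesimal form. If you want to retain the Brody-reparametrization flavour rather than the Zalcman route, the paper points you to Yamanoi's survey, but as written your proof does not establish the compact-complex-space case.
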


When a complex space is compact, it admits a nice metric.

\begin{theorem}[cf.\ {\cite[Subsection 4.1]{Yamanoisurvey}}]\label{th:Yamanoi}
For every compact complex space $V$,
there is a metric $\delta$ on $V$ satisfying that
\begin{enumerate}
 \item the distance $\delta$ induces the topology of $V$, and that
 \item there is an open covering $\{U_x:x\in V\}$ of $V$ such that
       for every $x\in V$, $U_x$ is a Kobayashi hyperbolic subdomain in $V$
       containing $x$ and satisfies $\delta\le d_{U_x}$ on $U_x(\times U_x)$.
       \label{item:neighborhood}
\end{enumerate}
\end{theorem}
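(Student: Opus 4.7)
The plan is to construct $\delta$ as the pullback of the Euclidean metric under an injective continuous map $\Psi\colon V\to\bR^M$ built from finitely many local chart embeddings of $V$ into polydisks, and then to verify condition~(2) by a uniform Lipschitz estimate.

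First, since a complex space is locally biholomorphic to a closed analytic subspace of a polydisk $\bD^n$, and any such subspace is Kobayashi hyperbolic by Fact~\ref{th:decreasing} applied to its inclusion into the Kobayashi hyperbolic $\bD^n$, every point of $V$ has a Kobayashi hyperbolic open neighborhood. Using compactness of $V$, I would extract a finite open cover $\{U_i\}_{i=1}^N$ of such Kobayashi hyperbolic subdomains together with biholomorphisms $\psi_i\colon U_i\to A_i\subset\bD^{n_i}$, and a shrinking $\{W_i\}_{i=1}^N$ with $\overline{W_i}\Subset U_i$ still covering $V$. For each $x\in V$ I set $U_x:=W_{i(x)}$ with $x\in W_{i(x)}$: this is Kobayashi hyperbolic as an open subset of $U_{i(x)}$, and Fact~\ref{th:decreasing} applied to $W_{i(x)}\hookrightarrow U_{i(x)}$ yields $d_{U_x}\ge d_{U_{i(x)}}$ on $U_x\times U_x$.

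Next, taking a continuous partition of unity $\{\phi_i\}_{i=1}^N$ subordinate to $\{W_i\}$, with each $\phi_i$ obtained by pulling back (and normalizing) smooth compactly supported bump functions on $\bD^{n_i}$ via $\psi_i$, I would define
\[
\Psi(p):=\bigl(\phi_i(p),\ \phi_i(p)\psi_i(p)\bigr)_{i=1}^{N}\in\bR^{M},
\]
with $\phi_i\psi_i\equiv 0$ off $U_i$ (consistent since $\operatorname{supp}\phi_i\Subset U_i$). Continuity of $\Psi$ is clear; injectivity follows because if $\Psi(p)=\Psi(q)$, any index $i_0$ with $\phi_{i_0}(p)=\phi_{i_0}(q)>0$ (which exists since $\sum_i\phi_i\equiv 1$) forces $\psi_{i_0}(p)=\psi_{i_0}(q)$ and hence $p=q$. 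Compactness of $V$ makes $\Psi$ a topological embedding, so $\delta(p,q):=c\,|\Psi(p)-\Psi(q)|$ is a metric on $V$ inducing its topology for every $c>0$, establishing~(1).

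To establish~(2) it then suffices to show $|\Psi(p)-\Psi(q)|\le C\,d_{U_i}(p,q)$ on $\overline{W_i}$ with a constant $C$ uniform in $i$, and to set $c:=1/C$. Applying \eqref{eq:comparable} in each coordinate of $\bD^{n_j}$ and Fact~\ref{th:decreasing} for $U_j\hookrightarrow\bD^{n_j}$ yields $|\psi_j(p)-\psi_j(q)|\le C_j\,d_{U_j}(p,q)$ on $\overline{W_j}$, while biholomorphism invariance of Kobayashi metrics together with bi-Lipschitz equivalence of the chart transitions on the compact overlaps $\overline{W_i}\cap\overline{W_j}$ gives compatible bounds relating $d_{U_j}$ to $d_{U_i}$ on the overlaps; combined with the smoothness of each $\phi_j$ in its chart, every coordinate of $\Psi$ becomes Lipschitz with respect to $d_{U_i}$ on $\overline{W_i}$ with a uniform constant. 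The main technical obstacle I expect is handling the boundary term $|(\phi_j\psi_j)(p)-0|$ when $p\in\operatorname{supp}\phi_j\cap\overline{W_i}$ but $q\in\overline{W_i}\setminus U_j$: here one must use that the two disjoint compact sets $\overline{W_i}\cap\overline{\operatorname{supp}\phi_j}$ and $\overline{W_i}\setminus U_j$ have strictly positive $d_{U_i}$-distance, and this uniform positive gap absorbs the bounded factor $|\phi_j\psi_j(p)|$ into the required Lipschitz bound.
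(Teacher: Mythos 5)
The paper does not prove Theorem~\ref{th:Yamanoi} at all; it states it with a citation to Yamanoi's survey (Subsection~4.1) and moves on, so there is no internal argument to compare your proof against. Taken on its own, your construction is a sound and natural route: cover the compact $V$ by finitely many Kobayashi-hyperbolic local-model charts $\psi_i\colon U_i\to A_i\subset\bD^{n_i}$ (an analytic subset of a polydisk is hyperbolic by Fact~\ref{th:decreasing} applied to its inclusion), shrink to $\{W_i\}$ with $\overline{W_i}\Subset U_i$, build $\Psi$ from a subordinate partition of unity and the $\psi_i$'s, and set $\delta:=c\,|\Psi(\cdot)-\Psi(\cdot)|$. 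Injectivity plus compactness makes $\Psi$ a topological embedding, which gives~(1); and once you prove $|\Psi(p)-\Psi(q)|\le C\,d_{U_i}(p,q)$ on $\overline{W_i}$ uniformly in $i$, taking $c:=1/C$ and using $d_{U_i}\le d_{W_i}=d_{U_x}$ (Fact~\ref{th:decreasing} applied to $W_i\hookrightarrow U_i$) gives~(2). The estimate $|\psi_i(p)-\psi_i(q)|\lesssim d_{\bD^{n_i}}(\psi_i(p),\psi_i(q))\le d_{A_i}(\psi_i(p),\psi_i(q))=d_{U_i}(p,q)$ on the compactly contained $\overline{W_i}$, via \eqref{eq:comparable} componentwise, is exactly the right local input.

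There is, however, an omission in your ``boundary term'' case analysis that you should repair. You treat $p\in\operatorname{supp}\phi_j\cap\overline{W_i}$ versus $q\in\overline{W_i}\setminus U_j$, relying on the positive $d_{U_i}$-gap between two disjoint compacts. But the sketch does not cover $q\in\overline{W_i}\cap U_j$ approaching $\partial U_j$ from inside: there $\phi_j(q)\psi_j(q)=0$ as well (since $\operatorname{supp}\phi_j\Subset W_j$), but your bound for \emph{interior} pairs uses Lipschitz control of the transition $\psi_j\circ\psi_i^{-1}$ on a compact, and $\psi_j(q)$ need not stay in any fixed compact of $\bD^{n_j}$, so that constant degenerates. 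The fix is routine: interpose a compact $K'$ with $\operatorname{supp}\phi_j\Subset\operatorname{int}K'\Subset K'\Subset W_j$; for $p,q\in\overline{W_i}\cap K'$ use chart Lipschitz bounds (all $\psi_j$-images stay in a fixed compact of $\bD^{n_j}$), and for $p\in\operatorname{supp}\phi_j\cap\overline{W_i}$, $q\in\overline{W_i}\setminus\operatorname{int}K'$ (regardless of whether $q\in U_j$) use the positive $d_{U_i}$-gap between the disjoint compacts $\overline{W_i}\cap\operatorname{supp}\phi_j$ and $\overline{W_i}\setminus\operatorname{int}K'$. With that refinement, and after checking that the Lipschitz bound for the holomorphic transition $\psi_j\circ\psi_i^{-1}$ on compacts of the (possibly singular) analytic sets $A_i$ is obtained by extending it to an ambient holomorphic map on an open subset of $\bC^{n_i}$, your sketch becomes a complete proof.
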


The following {\itshape Lipschitz continuity on disks} of holomorphic curves
from a domain in $\bC$ into compact complex spaces is useful:
the boundedness \eqref{eq:Lipschitz} is a special case of
the ``only if'' part of a Marty-type theorem
(Theorem \ref{th:Marty}) below, and the equality \eqref{eq:invariance}
is by the invariance under biholomorphisms of the Kobayashi (pseudo)metrics
(cf.\ Fact \ref{th:decreasing}).

\begin{theorem}[cf.\ {\cite[Subsection 2.3]{Yamanoisurvey}}]
 Let $V$ be a compact complex space equipped with a metric $\delta$
 satisfying the conditions in Theorem $\ref{th:Yamanoi}$.
Then for every open disk $\bD(a,r)$ and
every holomorphic mapping $f$ from an open neighborhood of
$\overline{\bD(a,r)}$ in $\bC$ to $V$, we have
\begin{gather}
 L_{f,\bD(a,r)}:=\sup_{w,w'\in\bD(a,r),w\neq w'}
\frac{\delta(f(w),f(w'))}{d_{\bD(a,r)}(w,w')}<\infty,\label{eq:Lipschitz}
 \end{gather}
which satisfies the invariance
\begin{gather}
 L_{f\circ\phi,\bD(b,s)}=L_{f,\bD(a,r)}\label{eq:invariance}
\end{gather}
for every biholomorphism $\phi:\bD(b,s)\to\bD(a,r)=\phi(\bD(b,s))$.
\end{theorem}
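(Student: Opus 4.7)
The invariance \eqref{eq:invariance} will be a one-line consequence of Fact \ref{th:decreasing}: a biholomorphism $\phi:\bD(b,s)\to\bD(a,r)$ preserves the Kobayashi pseudometric, so substituting $z=\phi(w)$, $z'=\phi(w')$ in the supremum defining $L_{f,\bD(a,r)}$ gives exactly the supremum defining $L_{f\circ\phi,\bD(b,s)}$. The real content is therefore the finiteness \eqref{eq:Lipschitz}.

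For \eqref{eq:Lipschitz}, the plan is to produce a single scale $\eta>0$ on which $f$ can be localized into one of the Kobayashi hyperbolic neighborhoods $U_x$ from Theorem \ref{th:Yamanoi}. Let $\Omega\subset\bC$ be an open neighborhood of $\overline{\bD(a,r)}$ to which $f$ extends holomorphically. For each $w\in\overline{\bD(a,r)}$, since $f(w)\in U_{f(w)}$ and both $\Omega$ and $U_{f(w)}$ are open while $f$ is continuous, there exists $\rho_w>0$ with $\bD(w,\rho_w)\subset\Omega$ and $f(\bD(w,\rho_w))\subset U_{f(w)}$. The compactness of $\overline{\bD(a,r)}$ then yields, via the Lebesgue number lemma applied to the cover $\{\bD(w,\rho_w):w\in\overline{\bD(a,r)}\}$, a number $\eta\in(0,r)$ such that for every $w\in\overline{\bD(a,r)}$ there is some $w''$ with $\bD(w,\eta)\subset\bD(w'',\rho_{w''})$, and hence $f(\bD(w,\eta))\subset U_{f(w'')}$.

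The supremum in \eqref{eq:Lipschitz} then splits into two regimes. In the regime $|w-w'|\le\eta/2$, both points lie in $\overline{\bD(w,\eta/2)}\subset\bD(w,\eta)$, and $f|_{\bD(w,\eta)}:\bD(w,\eta)\to U_{f(w'')}$ is holomorphic into a Kobayashi hyperbolic subdomain. Combining $\delta\le d_{U_{f(w'')}}$ from the second condition in Theorem \ref{th:Yamanoi}, the decreasing property (Fact \ref{th:decreasing}), and the two-sided comparison \eqref{eq:comparable} on both $\bD(w,\eta)$ (with inner radius $\eta/2$) and $\bD(a,r)$, one bounds the ratio $\delta(f(w),f(w'))/d_{\bD(a,r)}(w,w')$ by $4r/(3\eta)$. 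In the regime $|w-w'|>\eta/2$, the lower bound in \eqref{eq:comparable} gives $d_{\bD(a,r)}(w,w')>\eta/(2r)$, while $\delta(f(w),f(w'))\le\diam_{\delta}(V)<\infty$ by compactness of $V$; so the ratio is bounded by $2r\diam_{\delta}(V)/\eta$. Taking the maximum of the two bounds proves \eqref{eq:Lipschitz}.

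The step that requires the most care is the Lebesgue number argument, where one must verify that each $\rho_w$ can be chosen so that $\bD(w,\rho_w)\subset\Omega$; this is straightforward since $\Omega$ is an open neighborhood of $\overline{\bD(a,r)}$. After that, the two estimates are essentially bookkeeping of \eqref{eq:comparable}, so the crux of the argument lies entirely in how the Kobayashi hyperbolic cover from Theorem \ref{th:Yamanoi} combines with compactness of $\overline{\bD(a,r)}$.
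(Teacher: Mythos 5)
Your proof is correct, and it takes a genuinely different route from the paper. The paper does not prove this theorem directly: it deduces the finiteness \eqref{eq:Lipschitz} by applying the ``only if'' direction of the Marty-type theorem (Theorem \ref{th:Marty}) to the singleton family $\{f\}$, and that direction of Marty is itself established by contradiction, using Arzel\`a--Ascoli to extract a locally uniform limit and then localizing near a single point $b=\lim z_k$ into $U_{f(b)}$. You instead run a direct compactness argument: cover $\overline{\bD(a,r)}$ by disks on which $f$ lands in a single Kobayashi hyperbolic patch $U_x$, extract a Lebesgue number $\eta$, and then split the supremum into a small-scale regime (handled by $\delta\le d_{U_x}$, Fact \ref{th:decreasing}, and both sides of \eqref{eq:comparable}) and a large-scale regime (handled by $\diam_\delta(V)<\infty$ and the lower bound in \eqref{eq:comparable}). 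The ingredients are identical to those in the paper's Marty proof --- the hyperbolic cover from Theorem \ref{th:Yamanoi}\,\eqref{item:neighborhood}, the non-increasing property, and the disk comparison --- but your argument is constructive and elementary: it avoids normality, sequential compactness, and Arzel\`a--Ascoli altogether, and it produces an explicit Lipschitz constant $\max\{4r/(3\eta),\,2r\,\diam_\delta(V)/\eta\}$. One small point of care, which you flag correctly: the Lebesgue number must be taken so that the \emph{full} Euclidean disk $\bD(w,\eta)$ (not merely its intersection with $\overline{\bD(a,r)}$) lies in some $\bD(w'',\rho_{w''})$; this follows from the standard strengthening of the Lebesgue number lemma (the function $w\mapsto\sup\{\rho:\bD(w,\rho)\subset\bD(w'',\rho_{w''})\text{ for some }w''\}$ is $1$-Lipschitz and positive on the compact set), together with the fact that $\bD(w'',\rho_{w''})\subset\Omega$ by construction. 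The treatment of the invariance \eqref{eq:invariance} agrees with the paper: it is an immediate consequence of Fact \ref{th:decreasing}.
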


\begin{definition}
 For complex spaces $X$ and $Y$,
 let $\mathcal{O}(X,Y)$ be the set of all holomorphic mappings
 from $X$ to $Y$.
\end{definition}

We conclude this section with 
a generalization of Marty's theorem \cite[Th\'eor\`eme 5]{Marty31}
and that of Zalcman's lemma \cite{Zalcman75}, both of which
characterize the (non-)normality on $D$ of a family in $\mathcal{O}(D,V)$
for a domain $D$ in $\bC$ and a compact complex space $V$.

\begin{theorem}[a Marty-type theorem]\label{th:Marty}
 Let $D$ be a domain in $\bC$, $V$ a compact complex space
 equipped with a metric $\delta$
 satisfying the conditions in Theorem $\ref{th:Yamanoi}$,
 and $\mathcal{F}$ a family in $\mathcal{O}(D,V)$. Then, 
 $\mathcal{F}$
is normal on $D$ if and only if $\sup_{f\in\mathcal{F}}L_{f,\bD(a,r)}<\infty$
 for every $a\in D$ and every $r>0$ satisfying $\bD(a,r)\Subset D$. 
\end{theorem}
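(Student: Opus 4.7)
The plan is to treat the two directions separately: \emph{sufficiency} of the uniform Lipschitz bound reduces cleanly to Arzel\`a--Ascoli, while \emph{necessity} requires a more delicate covering argument combining the local structure of $\delta$ provided by condition (\ref{item:neighborhood}) of Theorem \ref{th:Yamanoi} with the non-increasing property (Fact \ref{th:decreasing}) of the Kobayashi pseudometric.

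For the \emph{if} direction, assume $\sup_{f\in\mathcal{F}}L_{f,\bD(a,r)}<\infty$ for every $\bD(a,r)\Subset D$. Covering any compact $K\subset D$ by finitely many disks $\bD(a_i,r_i')$ with $\bD(a_i,r_i)\Subset D$ and $r_i'<r_i$, and applying the upper estimate in \eqref{eq:comparable}, makes $\mathcal{F}$ uniformly $\delta$-equicontinuous on $K$; since $(V,\delta)$ is compact, Arzel\`a--Ascoli produces a locally uniformly convergent subsequence, whose limit is holomorphic by a standard local-coordinate argument.

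For the \emph{only if} direction, I argue by contradiction. Suppose $\sup_{f\in\mathcal{F}}L_{f,\bD(a,r)}=\infty$ for some $\bD(a,r)\Subset D$; choose $(f_n)\subset\mathcal{F}$ with $L_{f_n,\bD(a,r)}\to\infty$ and, by normality, pass to a subsequence converging in $\delta$ locally uniformly on $D$ to a holomorphic $g:D\to V$. Covering the compact $g(\overline{\bD(a,r)})$ by finitely many of the Kobayashi hyperbolic neighborhoods $U_x$ from condition (\ref{item:neighborhood}), a Lebesgue-number argument combined with the uniform continuity of $g$ on $\overline{\bD(a,r)}$, the uniform convergence $f_n\to g$ on a compact neighborhood of $\overline{\bD(a,r)}$ in $D$, and the openness of each $U_x$ then yields some $\epsilon_0>0$ and $N_0\in\bN$ such that for every $n\ge N_0$ and every $z\in\overline{\bD(a,r)}$, $\bD(z,\epsilon_0)\subset D$ and $f_n(\bD(z,\epsilon_0))$ is contained in some $U_{x(z,n)}$.

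A uniform bound on $L_{f_n,\bD(a,r)}$ for $n\ge N_0$ then follows from a two-case analysis on $w,w'\in\bD(a,r)$ with $w\neq w'$. When $|w-w'|<\epsilon_0/2$, Fact \ref{th:decreasing} applied to $f_n:\bD(w,\epsilon_0)\to U_{x(w,n)}$ together with $\delta\le d_{U_{x(w,n)}}$ gives
\[
\delta(f_n(w),f_n(w'))\le d_{\bD(w,\epsilon_0)}(w,w'),
\]
and two applications of \eqref{eq:comparable} bound the right-hand side by $(4r/(3\epsilon_0))\,d_{\bD(a,r)}(w,w')$. When $|w-w'|\ge\epsilon_0/2$, compactness gives $\delta(f_n(w),f_n(w'))\le\diam_\delta(V)$ while \eqref{eq:comparable} gives $d_{\bD(a,r)}(w,w')\ge\epsilon_0/(2r)$, so the ratio is again uniformly bounded. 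The resulting uniform bound on $L_{f_n,\bD(a,r)}$ for $n\ge N_0$ contradicts $L_{f_n,\bD(a,r)}\to\infty$. I expect the Lebesgue-number step to be the main obstacle, since $\epsilon_0$ must be chosen independently of both $z\in\overline{\bD(a,r)}$ and $n\ge N_0$ and must remain effective as $z$ approaches $\partial\bD(a,r)$; the two-sidedness of \eqref{eq:comparable} is what keeps the small-scale comparison alive up to the boundary.
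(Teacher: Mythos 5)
Your proof is correct in both directions, but the \emph{only if} argument is organized a bit differently from the paper's. The paper first fixes witnessing pairs $(z_k,w_k)$ with $\delta(f_k(z_k),f_k(w_k))/d_{\bD(a,r)}(z_k,w_k)\to\infty$, observes that $\diam_\delta(V)<\infty$ forces $|z_k-w_k|\to 0$, passes to a subsequence so that both $z_k$ and $w_k$ converge to a single point $b\in\bD(a,r)$, and then applies condition (\ref{item:neighborhood}) and Fact \ref{th:decreasing} only in the single chart $U_{f(b)}$ on a single small disk $\bD(b,\epsilon)$ --- one localization suffices to contradict the divergence along that specific sequence. You instead establish a genuinely uniform Lipschitz bound on all of $\bD(a,r)$ for $n\ge N_0$, which requires covering $g(\overline{\bD(a,r)})$ by finitely many $U_x$'s, a Lebesgue-number choice of a uniform $\epsilon_0$, and a two-case split on $|w-w'|$ versus $\epsilon_0/2$. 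Both routes rest on the same two ingredients (the local hyperbolic neighborhoods $U_x$ with $\delta\le d_{U_x}$ and the non-increasing property of Kobayashi pseudometrics), so the key estimate $\delta(f_n(w),f_n(w'))\le d_{\bD(\,\cdot\,,\epsilon)}(w,w')$ is identical; what differs is that the paper only needs to contradict divergence at a single accumulation point, whereas your argument proves the stronger conclusion $\sup_{n\ge N_0}L_{f_n,\bD(a,r)}<\infty$ directly. Your stated worry about the Lebesgue-number step near $\partial\bD(a,r)$ is unfounded but for a cleaner reason than you give: the lower bound $\tfrac1R|z-w|\le d_{\bD(a,R)}(z,w)$ in \eqref{eq:comparable} comes from the pointwise bound $R/(R^2-|z-a|^2)\ge 1/R$ on the Poincar\'e density and therefore holds on all of $\bD(a,R)$ without any compact-containment hypothesis, so the denominator $d_{\bD(a,r)}(w,w')$ in your Case 2 is uniformly bounded below by $\epsilon_0/(2r)$ even when $w,w'$ approach $\partial\bD(a,r)$.
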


\begin{proof}
 By the Arzel\`a--Ascoli theorem, $\mathcal{F}$ 
 is normal on $D$ if and only if $\mathcal{F}$ is locally equicontinuous on $D$.
 Let us show that $\mathcal{F}$ is locally equicontinuous on $D$
 if and only if $\sup_{f\in\mathcal{F}}L_{f,\bD(a,r)}<\infty$
 for every $a\in D$ and every $r>0$ satisfying $\bD(a,r)\Subset D$.
 The ``if'' part is straightforward by \eqref{eq:comparable}.

 Suppose that $\bD(a,r)\Subset D$ but
 $\sup_{f\in\mathcal{F}}L_{f,\bD(a,r)}=\infty$
 for some $a\in D$ and some $r>0$.
 Then there are a sequence
 $(f_k)$ in $\mathcal{F}$ and sequences $(z_k),(w_k)$ in $\bD(a,r)$
 such that $z_k\neq w_k$ for every $k\in\bN$ and that
 \begin{gather}
  \lim_{k\to\infty}\frac{\delta(f_k(z_k),f_k(w_k))}{d_{\bD(a,r)}(z_k,w_k)}=\infty.\label{eq:divergenceMarty}
 \end{gather}
 Increasing $r>0$ slightly and taking a subsequence if necessary,
 the limit $b:=\lim_{k\to\infty}z_k$
 exists in $\bD(a,r)$, and then
 by $\diam_{\delta}(V)<\infty$, we also have $\lim_{k\to\infty}w_k=b$.

 Suppose contrary that $\mathcal{F}$ is locally equicontinuous on $D$.
 Then by the Arzel\`a--Ascoli theorem, taking a subsequence if necessary,
 there exists a locally uniform limit $f:=\lim_{k\to\infty}f_k$ on $D$.
 Then there exists $\epsilon>0$ so small that for every $k\in\bN$,
 $f_k(\bD(b,\epsilon))\subset U_{f(b)}$.
 Hence, for every $k\in\bN$ so large that $z_k,w_k\in\bD(b,\epsilon)$, we have
\begin{gather}
 \delta(f_k(z_k),f_k(w_k))\le d_{U_{f(b)}}(f_k(z_k),f_k(w_k))
 \le d_{\bD(b,\epsilon)}(z_k,w_k)\label{eq:decrease}
\end{gather} 
 by the property \eqref{item:neighborhood} of $U_x$ for $x=f(b)$
 in Theorem \ref{th:Yamanoi} and the non-increasing property
 under holomorphisms of the Kobayashi (pseudo)metrics
 (cf.\ Fact \ref{th:decreasing}).
 By \eqref{eq:decrease} and \eqref{eq:comparable}, we have
 \begin{multline*}
  \lim_{k\to\infty}\frac{\delta(f_k(z_k),f_k(w_k))}{d_{\bD(a,r)}(z_k,w_k)}
  \le\lim_{k\to\infty}\frac{d_{\bD(b,\epsilon)}(z_k,w_k)}{d_{\bD(a,r)}(z_k,w_k)}\\
  \le\lim_{k\to\infty}\left(\frac{r}{|z_k-w_k|}\cdot\frac{\epsilon\cdot|z_k-w_k|}{\epsilon^2-(\epsilon/2)^2}\right)
  =\frac{r}{3\epsilon/4}<\infty,
 \end{multline*}
 which contradicts \eqref{eq:divergenceMarty}.
 Now the proof of the ``only if'' part 
 is complete.
\end{proof}

\begin{theorem}[a Zalcman-type theorem]\label{th:ZB}
 Let $D$ be a domain in $\bC$, $V$ a compact complex space
 equipped with a metric $\delta$
 satisfying the conditions in Theorem $\ref{th:Yamanoi}$,
 and $\mathcal{F}$ a family in $\mathcal{O}(D,V)$. Then, $\mathcal{F}$
 is {\rm not} normal at a point $a\in D$, that is,
 not normal on any open neighborhood of $a$ in $D$, if and only if
 there are sequences $(f_k)$, $(z_k)$, and $(\rho_k)$ in $\mathcal{F}$,
 $D$, and $(0,\infty)$, respectively, and
 a non-constant $g\in\mathcal{O}(\bC,V)$ 
 such that 
 $\lim_{k\to\infty}z_k=a$, that $\lim_{k\to\infty}\rho_k=0$, and that
 $\lim_{k\to\infty}f_k(z_k+\rho_k v)= g(v)$
 locally uniformly on $\bC$. 
\end{theorem}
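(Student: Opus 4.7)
The \emph{if} direction is quickly handled via the Marty-type theorem. Assume sequences $(f_k)$, $(z_k)$, $(\rho_k)$ and a non-constant $g\in\mathcal{O}(\bC,V)$ as described exist. Pick $v_1,v_2\in\bC$ with $g(v_1)\neq g(v_2)$, and set $c:=\delta(g(v_1),g(v_2))/2>0$. The local uniform convergence $f_k(z_k+\rho_k v)\to g(v)$ gives $\delta(f_k(z_k+\rho_k v_1),f_k(z_k+\rho_k v_2))\ge c$ for all large $k$. Choose $r>0$ with $\bD(a,r)\Subset D$ and any $r'\in(0,r)$; since $z_k\to a$ and $\rho_k\to 0$, the points $z_k+\rho_k v_i$ eventually lie in $\bD(a,r')$, and \eqref{eq:comparable} then gives $d_{\bD(a,r)}(z_k+\rho_k v_1,z_k+\rho_k v_2)\le (r/(r^2-r'^2))\,\rho_k|v_1-v_2|\to 0$. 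Hence $L_{f_k,\bD(a,r)}\to\infty$, and by Theorem~\ref{th:Marty}, $\mathcal{F}$ is not normal on $\bD(a,r)$, hence not normal at $a$.

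For the \emph{only if} direction, I plan to adapt the classical Zalcman rescaling. First introduce the pointwise \emph{Marty-derivative} $f^\#(z):=\limsup_{w\to z}\delta(f(z),f(w))/|z-w|$ on $D$, which is upper semicontinuous. A crucial ``bridge'' step, using \eqref{eq:comparable} together with a path-integral/length estimate valid inside the Kobayashi hyperbolic neighborhoods $U_x$ guaranteed by Theorem~\ref{th:Yamanoi} (on which $\delta\le d_{U_x}$), shows that $\{L_{f,\bD(b,s)}\}_{f\in\mathcal{F}}$ being unbounded is equivalent to $\{f^\#\}_{f\in\mathcal{F}}$ being locally unbounded. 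Combining this with Theorem~\ref{th:Marty} under the non-normality of $\mathcal{F}$ at $a$ yields $\sup_{f\in\mathcal{F}}\sup_{z\in U}f^\#(z)=\infty$ for every neighborhood $U$ of $a$. Now fix $r>0$ with $\bD(a,r)\Subset D$, and consider $\psi_k(z):=(r-|z-a|)\,f_k^\#(z)$: it is upper semicontinuous on $\overline{\bD(a,r)}$ and vanishes on the boundary, so attains its supremum $M_k$ at some $\tilde z_k\in\bD(a,r)$, with $M_k\to\infty$ by the previous step. Setting $\rho_k:=(r-|\tilde z_k-a|)/M_k\to 0$ and $z_k:=\tilde z_k\to a$, the standard Zalcman maximum argument shows that the rescaled family $g_k(v):=f_k(z_k+\rho_k v)$, defined on $\bD(0,M_k/2)$, satisfies $g_k^\#(v)\le 2$ on $\bD(0,M_k/2)$ and $g_k^\#(0)=1$.

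Finally, the uniform pointwise bound on $g_k^\#$ converts (via the same bridge and \eqref{eq:comparable}) into $\sup_k L_{g_k,\bD(0,R')}<\infty$ for every $R'>0$ and all $k$ so large that $R'<M_k/2$; Theorem~\ref{th:Marty} then yields normality of $\{g_k\}$ on every $\bD(0,R)$, and a diagonal Arzel\`a--Ascoli extraction produces the sought locally uniform limit $g\in\mathcal{O}(\bC,V)$, non-constant since $g^\#(0)\ge 1$. The principal obstacle will be the \emph{bridge} between the integral Lipschitz constant $L_{f,\bD(a,r)}$ of Theorem~\ref{th:Marty} and the pointwise Marty-derivative $f^\#$ needed for the Zalcman maximum trick, in the possibly singular complex space $V$. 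This is precisely where the original Zalcman lemma \cite{Zalcman75} enters the proof: by applying it in local holomorphic charts $V\cap U_x\hookrightarrow\bC^N$ around the values $f_k(\tilde z_k)$ (which converge along a subsequence by compactness of $V$) to extract non-constant limits from component functions, one can circumvent the need for a fully pointwise Marty theorem valid on arbitrary compact complex spaces.
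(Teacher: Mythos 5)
Your ``if'' direction is sound and essentially matches the paper's argument (the paper deduces it from the ``if'' part of Theorem~\ref{th:Zalcman}, which is proved by the same Marty-type computation you give). The ``only if'' direction, however, has a genuine gap.

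The paper's proof is a two-stage ``double rescaling'': it first uses the divergence $\limsup_{r\to 0}\sup_{f\in\mathcal{F}}L_{f,\bD(a,r)}=\infty$ to rescale the family onto the unit disk via $g_k(w):=f_k(a+r_kw)$ and then applies Theorem~\ref{th:Zalcman} as a black box. The proof of Theorem~\ref{th:Zalcman} itself deliberately avoids any pointwise Marty derivative. In place of $f^\#$ it maximizes the \emph{two-point difference quotient}
\begin{gather*}
M_k:=\sup_{z,w\in\bD(r):\,z\neq w}\left(\frac{r^2-|z|^2}{r^2}\cdot\frac{\delta(f_k(z),f_k(w))}{|z-w|}\right),
\end{gather*}
which is finite thanks to \eqref{eq:Lipschitz}, picks a pair $(z_k,w_k)$ nearly realizing $M_k$, and defines the scale by $\rho_k:=|z_k-w_k|/\delta(f_k(z_k),f_k(w_k))$; non-constancy of the limit $g$ is extracted from the identity $\delta(g_k(0),g_k(v_k))/|v_k|=1$ for the specific points $v_k:=(z_k-w_k)/\rho_k$ together with the Kobayashi comparison \eqref{eq:comparable}.

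Your proposal instead sets $f^\#(z):=\limsup_{w\to z}\delta(f(z),f(w))/|z-w|$ and tries to run the classical maximization of $\psi_k(z):=(r-|z-a|)f_k^\#(z)$. In the generality of the theorem --- $V$ an arbitrary compact complex space (possibly singular) with $\delta$ only subject to the two conditions of Theorem~\ref{th:Yamanoi}, with no smoothness, no Hermitian structure, no length-metric assumption --- the function $f^\#$ is not known to be upper semicontinuous (you assert this without justification), nor is the ``bridge'' you describe between local unboundedness of $f^\#$ and unboundedness of the integral constants $L_{f,\bD(b,s)}$ available: that equivalence needs an infinitesimal control of $\delta$ that simply is not assumed. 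You identify this as the principal obstacle, but your proposed remedy (invoking the original Zalcman lemma on component functions in local holomorphic charts $V\cap U_x\hookrightarrow\bC^N$) is not worked out and does not obviously close the gap: a limit of a single coordinate under Zalcman rescaling is an entire function, not a holomorphic map into the analytic subset $V\cap U_x$, and assembling different coordinates and charts into a global non-constant $g\in\mathcal{O}(\bC,V)$ with the correct normalization at $0$ is exactly the kind of patching the paper's two-point $M_k$ device is designed to sidestep. The remedy is to replace your $\psi_k$-maximization by the paper's two-point maximization (either directly, or by reducing to Theorem~\ref{th:Zalcman} for the disk, as the paper does).
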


We will divide the proof of Theorem \ref{th:ZB} into two parts:
a deduction of Theorem \ref{th:ZB} from the {\itshape original} Zalcman's lemma
(Theorem \ref{th:Zalcman} below),
and a proof of this original one. 
The former part, say, a {\itshape double rescaling argument},
also motivates a part of the proof of Theorem \ref{th:rescaling}.

The following is due to Zalcman 
in the case where $V=\bP^1$ equipped with the Fubini-Study metric.

\begin{theorem}\label{th:Zalcman}
 Let $V$ be a compact complex space
 equipped with a metric $\delta$
 satisfying the conditions in Theorem $\ref{th:Yamanoi}$,
 and $\mathcal{F}$ a family in $\mathcal{O}(\bD,V)$. Then,
 $\mathcal{F}$ is not normal on $\bD$ if and only if
 there are sequences $(f_k)$, $(z_k)$, and $(\rho_k)$ in $\mathcal{F}$,
 $\bD$, and $(0,\infty)$, respectively, and
 a non-constant $g\in\mathcal{O}(\bC,V)$ 
 such that $\lim_{k\to\infty}z_k$ exists in $\bD$,
 that $\lim_{k\to\infty}\rho_k=0$, and that
 $\lim_{k\to\infty}f_k(z_k+\rho_k v)=g(v)$ locally uniformly on $\bC$.
\end{theorem}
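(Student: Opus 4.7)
The plan is to handle the two directions separately. The ``if'' direction is a short compactness argument; the ``only if'' direction is the substantive one and is carried out by adapting Zalcman's classical rescaling trick to the Kobayashi--Lipschitz framework of Section~\ref{sec:background}.

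\emph{The easy direction.} Suppose the sequences $(f_k),(z_k),(\rho_k)$ and the non-constant holomorphic limit $g$ exist with $z_0:=\lim_k z_k\in\bD$, and suppose for contradiction that $\mathcal{F}$ is normal on $\bD$. After passing to a subsequence of $(f_k)$ converging locally uniformly to some $f\in\mathcal{O}(\bD,V)$, the fact that $\rho_k\to 0$ means the points $z_k+\rho_k v$ tend to $z_0$ uniformly in $v$ on every compact subset of $\bC$; hence, combining the uniform convergence $f_k\to f$ near $z_0$ with the continuity of $f$ at $z_0$ (note $\delta$ induces the topology of $V$) via a standard $\varepsilon/3$ estimate, $f_k(z_k+\rho_k v)\to f(z_0)$ locally uniformly on $\bC$. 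This contradicts the non-constancy of $g$.

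\emph{The hard direction.} Assume $\mathcal{F}$ is not normal on $\bD$. Theorem~\ref{th:Marty} then yields $a\in\bD$ (taken to be $0$ after translation), $r>0$ with $\overline{\bD(r)}\subset\bD$, and a sequence $(f_k)\subset\mathcal{F}$ with $L_{f_k,\bD(r)}\to\infty$. The goal is to select $z_k\in\bD(r)$ admitting a subsequence with limit in $\bD$, and $\rho_k\to 0^+$, so that the rescalings $g_k(v):=f_k(z_k+\rho_k v)$, defined and holomorphic on the expanding disks $\bD(0,(r-|z_k|)/\rho_k)$, satisfy a uniform upper bound $L_{g_k,\bD(0,s)}\le C(s)$ for every fixed $s>0$ together with a uniform strictly positive lower bound on $L_{g_k,\bD}$. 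The identity making this feasible is the invariance \eqref{eq:invariance}: under $\phi_k(v)=z_k+\rho_k v$, one has $L_{g_k,\bD(0,s)}=L_{f_k,\bD(z_k,\rho_k s)}$, so the problem reduces to placing $(z_k,\rho_k)$ where the ``local Kobayashi--Lipschitz constants'' of $f_k$ are critically balanced. Once such $(z_k,\rho_k)$ are in hand, the uniform upper bounds combined with \eqref{eq:comparable} make $(g_k)$ locally equi-Lipschitz on $\bC$, so the Arzel\`a--Ascoli theorem and compactness of $V$ extract a locally uniform subsequential limit $g:\bC\to V$; holomorphicity is preserved under such limits, and the lower bound on $L_{g_k,\bD}$ passes to the limit to force $g$ non-constant.

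\emph{Main obstacle.} The principal difficulty lies in the Zalcman-style selection of $(z_k,\rho_k)$. In the classical setting one maximizes the continuous function $z\mapsto(r-|z|)f_k^{\#}(z)$ on $\overline{\bD(r)}$ and takes $\rho_k=(r-|z_k|)/\text{(its maximum)}$, but in our setting $V$ is a general compact complex space and no intrinsic pointwise derivative $f^{\#}$ is available. The natural substitute is the two-parameter quantity $(z,s)\mapsto L_{f_k,\bD(z,s)}$, which is continuous in $(z,s)$, tends to $0$ as $s\to 0^+$ for each fixed $f_k$ (by the local Lipschitz estimate \eqref{eq:Lipschitz} together with \eqref{eq:comparable} and the local-coordinate behaviour of $f_k$ near each point), and by hypothesis grows unboundedly in $k$ on the global scale. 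The technical heart of the proof will be to turn these properties into a maximization principle that simultaneously controls $L_{g_k,\bD(0,s)}$ for \emph{every} $s>0$ (not merely $s=1$) and guarantees the lower bound needed for non-constancy of the limit.
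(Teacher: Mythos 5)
Your ``if'' direction is correct and in fact a bit more elementary than the paper's, which deduces a uniform Lipschitz bound from Theorem~\ref{th:Marty} and lets $\rho_k\to 0$ kill the difference quotient; your $\varepsilon/3$ argument via locally uniform convergence of $(f_k)$ works just as well, so that part can stand.

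The ``only if'' direction, however, contains a genuine gap, and you have named it yourself: the selection of $(z_k,\rho_k)$ is advertised as ``the technical heart of the proof'' and then left open. The paper resolves this not with your proposed two-parameter quantity $L_{f_k,\bD(z,s)}$ but with a two-point replacement for the spherical derivative: it sets
\[
M_k:=\sup_{z,w\in\bD(r),\,z\neq w}\frac{r^2-|z|^2}{r^2}\cdot\frac{\delta(f_k(z),f_k(w))}{|z-w|},
\]
picks a pair $(z_k,w_k)$ realizing at least $M_k/2$, and defines $\rho_k:=|z_k-w_k|/\delta(f_k(z_k),f_k(w_k))$ and $R_k:=(r-|z_k|)/\rho_k$. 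The weight $\tfrac{r^2-|z|^2}{r^2}$, together with near-maximality of $(z_k,w_k)$, gives the explicit locally uniform upper bound $\delta(g_k(x),g_k(y))/|x-y|\le 4/(1-|x|/R_k)$ for $g_k(v)=f_k(z_k+\rho_k v)$, which is what Arzel\`a--Ascoli needs. The definition of $\rho_k$ simultaneously forces the normalization $\delta(g_k(0),g_k(v_k))/|v_k|=1$ with $v_k:=(z_k-w_k)/\rho_k$ bounded. Your plan merely asserts that ``the lower bound on $L_{g_k,\bD}$ passes to the limit to force $g$ non-constant,'' but this is precisely where care is needed: the witnessing points $0$ and $v_k$ may coalesce, and a constant limit is \emph{not} a priori excluded by a uniform lower bound on Lipschitz constants over shrinking pairs. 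The paper handles this by contradiction: if $g$ were constant then $v_k\to 0$, and for any fixed $R>1$ one has $g_k(\bD(R))\subset U_{g(0)}$ eventually, whence $\delta(g_k(0),g_k(v_k))\le d_{U_{g(0)}}(g_k(0),g_k(v_k))\le d_{\bD(R)}(0,v_k)$ by Theorem~\ref{th:Yamanoi}\eqref{item:neighborhood} and the Kobayashi contraction property; combined with \eqref{eq:comparable} this yields $\limsup_k\delta(g_k(0),g_k(v_k))/|v_k|\le 1/R<1$, contradicting the normalization $=1$. You would need to supply both the concrete maximization and this Kobayashi-contraction endgame to have a complete proof.
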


\begin{proof}[Proof of Theorem $\ref{th:ZB}$ using Theorem $\ref{th:Zalcman}$]
 The ``if'' part of Theorem $\ref{th:ZB}$
 is straightforward from that of the original Zalcman's lemma
 (Theorem \ref{th:Zalcman}).

 Suppose that $\mathcal{F}$ 
 is not normal at a point $a\in D$. Then by \eqref{eq:comparable}
 and the Arzel\`a--Ascoli theorem,
 $\limsup_{r\to 0}\sup_{f\in\mathcal{F}}L_{f,\bD(a,r)}=\infty$,
 that is, there are sequences $(r_k)$ in $(0,\infty)$
 and $(f_k)$ in $\mathcal{F}$ such that $\lim_{k\to\infty}r_k=0$ and that
 \begin{gather}
 \lim_{k\to\infty}L_{f_k,\bD(a,r_k)}=\infty.\label{eq:divergence}
 \end{gather}

 Fix $\epsilon>0$ small enough. Then
 for every $k\in\bN$, a holomorphic mapping
 $g_k\in\mathcal{O}(\bD(1+\epsilon),V)$ is defined by
 \begin{gather*}
 g_k(w):=f_k(a+r_k w),
 \end{gather*}
 and then $L_{g_k,\bD}=L_{f_k,\bD(a,r_k)}$ by \eqref{eq:invariance}. 
 Hence by \eqref{eq:divergence} and a Marty-type theorem (Theorem \ref{th:Marty}),
 the family $\{g_k:k\in\bN\}$ is not normal on $\bD(1+\epsilon/2)$,
 so that by Theorem \ref{th:Zalcman}, there are sequences $(w_j),(\eta_j)$, and
 $(k_j)$ in $\bC,(0,\infty)$, and $\bN$, respectively, and a
 non-constant $g\in\mathcal{O}(\bC,V)$ such that $\lim_{j\to\infty}w_j$
 exists in $\bD(1+\epsilon/2)$, that $\lim_{j\to\infty}\eta_j=0$, that
 $\lim_{j\to\infty}k_j=\infty$, and that
 \begin{gather*}
 g(v)=\lim_{j\to\infty}g_{k_j}(w_j+\eta_jv)
 \end{gather*} 
 locally uniformly on $\bC$. For every $j\in\bN$, we have
 $g_{k_j}(w_j+\eta_jv)=f_{k_j}((a+r_{k_j}w_j)+(r_{k_j}\eta_j)v)$
 on $\bD(\eta_j^{-1}(\epsilon/2))$,
 and set $z_j:=a+r_{k_j}w_j$ and $\rho_j:=r_{k_j}\eta_j$.

 Now the proof of the ``only if'' part of Theorem $\ref{th:ZB}$
 is complete
 since $\lim_{j\to\infty}z_j=a$ and
 $\lim_{j\to\infty}\rho_j=0$.
\end{proof}

\begin{proof}[Proof of Theorem $\ref{th:Zalcman}$] 
 If $\mathcal{F}$ is not normal on $\bD$, then by \eqref{eq:comparable}
 and the Arzel\`a--Ascoli theorem,
 we have $\sup_{f\in\mathcal{F}}L_{f,\bD(r)}=\infty$
 for some $r\in(0,1)$.
 Increasing $r\in(0,1)$ slightly if necessary,
 we can choose sequences $(z_k'),(w_k')$ in $\bD(r)$
 and a sequence $(f_k)$ in $\mathcal{F}$ such that both $\{z_k':k\in\bN\}$
 and $\{w_k':k\in\bN\}$ are relatively compact in $\bD(r)$,
 that $z_k'\neq w_k'$ for every $k\in\bN$, 
 and that
 $\lim_{k\to\infty}\delta(f_k(z_k'),f_k(w_k'))/d_{\bD(r)}(z_k',w_k')=\infty$.
 Then, by \eqref{eq:comparable}, we also have
 $\lim_{k\to\infty}\delta(f_k(z_k'),f_k(w_k'))/|z_k'-w_k'|=\infty$.

 For every $k\in\bN$, set
 \begin{gather*}
  M_k:=\sup_{z,w\in\bD(r):z\neq w}\left(\frac{r^2-|z|^2}{r^2}\cdot\frac{\delta(f_k(z),f_k(w))}{|z-w|}\right).
 \end{gather*}
 Then $\lim_{k\to\infty}M_k=\infty$.
 For every $k\in\bN$,
 choose distinct $z_k,w_k\in\bD(r)$ satisfying that
 \begin{gather}
   \frac{r^2-|z_k|^2}{r^2}\cdot
\frac{\delta(f_k(z_k),f_k(w_k))}{|z_k-w_k|}\ge\frac{M_k}{2},\label{eq:large}
 \end{gather}
 and set
 \begin{gather*}
  \rho_k:=\frac{|z_k-w_k|}{\delta(f_k(z_k),f_k(w_k))}\left(\le\frac{2}{M_k}\right)
  \quad\text{and}\quad R_k:=\frac{r-|z_k|}{\rho_k}\left(\ge\frac{M_k}{2}\cdot\frac{r}{2}\right).
 \end{gather*}
 Then $\rho_k\to 0$ and $R_k\to\infty$ as $k\to\infty$. Taking a subsequence
 if necessary, we can also assume that $\lim_{k\to\infty}z_k$ exists in
 $\overline{\bD(r)}$.

 For every $k\in\bN$ and every $v\in\bD(R_k)$, $z_k+\rho_k v\in\bD(r)$. 
 Hence for each $k\in\bN$, a holomorphic mapping $g_k\in\mathcal{O}(\bD(R_k),V)$
 is defined by
 \begin{gather*}
  g_k(v):=f_k(z_k+\rho_k v)
 \end{gather*}
 and satisfies that
 for every distinct $x,y\in\bD(R_k)$,
 \begin{multline*}
 \frac{\delta(g_k(x),g_k(y))}{|x-y|}
 =\frac{\delta(f_k(z_k+\rho_k x),f_k(z_k+\rho_k y))}{|(z_k+\rho_k x)-(z_k+\rho_k y)|}\cdot\rho_k\\
  \le \frac{r^2M_k}{r^2-|z_k+\rho_k x|^2}\cdot
  \frac{2}{M_k}\frac{r^2-|z_k|^2}{r^2}
  =2\frac{r^2-|z_k|^2}{r^2-|z_k+\rho_k x|^2}\\
  \le 2\frac{(r+|z_k|)(r-|z_k|)}{(r+|z_k+\rho_k x|)(r-|z_k|-\rho_k|x|)}
  \le 2\cdot\frac{2r}{r}\cdot \frac{1}{1-|x|/R_k}
  = \frac{4}{1-|x|/R_k}.
 \end{multline*}
 Hence $\{g_k:k\in\bN\}$ is locally equicontinuous on $\bC$.

 By the Arzel\`a--Ascoli theorem,
 taking a subsequence if necessary, there exists the locally uniform limit
 $g:=\lim_{k\to\infty}g_k$ on $\bC$, which is in $\mathcal{O}(\bC,V)$.
 It remains to show that $g$ is non-constant: For every $k\in\bN$, set
\begin{gather*}
  v_k:=\frac{z_k-w_k}{\rho_k}\in\bC\setminus\{0\}.
\end{gather*} 
 By the definition of $\rho_k$, we have not only
 $\sup_{k\in\bN}|v_k|\le\diam_{\delta}V<\infty$
 but also, for every $k\in\bN$,
 \begin{gather}
  \frac{\delta(g_k(0),g_k(v_k))}{|v_k|}
=\frac{\delta(f_k(z_k),f_k(w_k))}{|z_k-w_k|}\cdot\rho_k=1.\label{eq:closing}
 \end{gather}
 Taking a subsequence if necessary, we can assume that the limit
 $v^*:=\lim_{k\to\infty}v_k$ exists in $\bC$.

 Suppose contrary that $g$ is constant on $\bC$. Then
 by \eqref{eq:closing}, we must have $v^*=0$.
 Fix $R>1$. For every $k\in\bN$ large enough, we have $v_k\in\bD(R)$ and
 $g_k(\bD(R))\subset U_{g(0)}$, so that
 \begin{gather}
  \delta(g_k(0),g_k(v_k))
  \le d_{U_{g(0)}}(g_k(0),g_k(v_k))\le d_{\bD(R)}(0,v_k)\label{eq:shrinking}
 \end{gather}
 by the property \eqref{item:neighborhood} of $U_x$ for $x=g(0)$
 in Theorem \ref{th:Yamanoi} and the non-increasing property
 under holomorphisms
 of the Kobayashi (pseudo)metrics (cf.\ Fact \ref{th:decreasing}).
 By \eqref{eq:shrinking} and \eqref{eq:comparable}, we have
 \begin{multline*}
  \limsup_{k\to\infty}\frac{\delta(g_k(0),g_k(v_k))}{|v_k|}
  \le\limsup_{k\to\infty}\frac{d_{\bD(R)}(0,v_k)}{|v_k|}\\
  \le\limsup_{k\to\infty}\frac{R}{R^2-|v_k|^2}
  =\frac{1}{R}<1,
 \end{multline*}
 which contradicts \eqref{eq:closing}. Hence 
 $g$ is non-constant on $\bC$, and
 the proof of the ``only if'' part is complete.

 Suppose now that 
 there are sequences $(f_k)$, $(z_k)$, and $(\rho_k)$ in $\mathcal{F}$,
 $\bD$, and $(0,\infty)$, respectively, and
 a non-constant $g\in\mathcal{O}(\bC,V)$ 
 such that the limit $a:=\lim_{k\to\infty}z_k$ exists in $\bD$,
 that $\lim_{k\to\infty}\rho_k=0$, and that
 $\lim_{k\to\infty}f_k(z_k+\rho_k v)=g(v)$ locally uniformly on $\bC$.
 Fix $r>0$ so small that $\bD(a,r)\Subset\bD$.

 Suppose contrary that $\mathcal{F}$ is normal on $\bD$. Then by
 a Marty-type theorem (Theorem \ref{th:Marty}), we have
 $\sup_{f\in\mathcal{F}}L_{f,\bD(a,r)}<\infty$.
 In particular, by \eqref{eq:comparable},
 we have
\begin{gather*}
 C:=\sup_{f\in\mathcal{F}}\left(\sup_{z,w\in\bD(a,r/2),z\neq w}\frac{\delta(f(z),f(w))}{|z-w|}\right)<\infty.
\end{gather*}
 For every distinct $x,y\in\bC$,
 since $\lim_{k\to\infty}(z_k+\rho_k x)=\lim_{k\to\infty}(z_k+\rho_k y)=a$,
 we have
 \begin{gather*}
  \frac{\delta(g(x),g(y))}{|x-y|}=\lim_{k\to\infty}\frac{\delta(f_k(z_k+\rho_k x),f_k(z_k+\rho_k y))}{|(z_k+\rho_k x)-(z_k+\rho_k y)|}\cdot\rho_k\le C\cdot\lim_{k\to\infty}\rho_k=0,
 \end{gather*}
 so that $g(x)=g(y)$. This contradicts that $g$ is non-constant on $\bC$,
 and the proof of the ``if'' part is also complete.
\end{proof}

\begin{remark}
 For a much shorter proof of (the ``only if'' part of) Theorem \ref{th:ZB} in the case where $V$ is
 an Hermitian compact complex manifold,
 see Berteloot \cite[\S 2.1]{Berteloot06}.
\end{remark}

\begin{remark}\label{th:Brodyproof}
 Brody's theorem (cf.\ Fact \ref{th:little})
 in the setting that $U$ is not necessarily an Hermitian compact complex manifold
 follows from the original Zalcman's lemma (Theorem \ref{th:Zalcman})
 by showing that {\itshape if a compact complex space $V$ is
 not Kobayashi hyperbolic,
 then the family $\mathcal{O}(\bD,V)$ is not normal on $\bD$}:
 Fix $p,q\in V$. We can fix $r\in(0,1)$ so close to $1$ that
 $d_V(p,q)=\inf\sum_{j=1}^md_{\bD}(a_j,b_j)$, where the infimum is taken over
 all such chains $((f_j)_{j=1}^m,((a_j,b_j))_{j=1}^m)$
 of holomorphic disks from $p$ to $q$ that satisfy
 $a_j\equiv 0$ and $b_j\in\bD(r)$ for every $j\in\{1,2,\ldots,m\}$.
 Let us equip $V$ with a metric $\delta$
 satisfying the conditions in Theorem $\ref{th:Yamanoi}$.

 Suppose now that $\mathcal{O}(\bD,V)$ is normal on $\bD$.
 Then by a Marty-type theorem (Theorem \ref{th:Marty}), we have
 $C(r):=\sup_{f\in\mathcal{O}(\bD,V)}L_{f,\bD(r)}<\infty$, and then 
 using also the triangle inequality for $\delta$, we have
 $d_V(p,q)\ge\delta(p,q)/C(r)$. This implies that $d_V(p,q)>0$ if $p\neq q$,
 i.e., $V$ is Kobayashi hyperbolic.
\end{remark}

\begin{remark}
For another proof of 
Brody's theorem (cf.\ Fact \ref{th:little})
similar to the original Brody's argument
and  
in the setting that $U$ is not necessarily an Hermitian compact complex manifold,
see Yamanoi \cite[\S2.3]{Yamanoisurvey}.
\end{remark}

\section{Proof of Theorem \ref{th:HeinonenRossi}}
\label{sec:LV}

Let $V$ be a complex space equipped with a metric $\delta$
inducing the topology of $V$,
and $f:\bD\setminus\{0\}\to V$ 
a holomorphic mapping having an isolated essential
singularity at the origin. 
Suppose that
$\bigcap_{r>0}\overline{f(\bD(r)\setminus\{0\})}\neq\emptyset$.
Then we can fix a sequence $(z_n)$ in $\bD\setminus\{0\}$ tending to $0$ as
$n\to\infty$ such that
the limit $a:=\lim_{n\to\infty}f(z_n)$ exists in $V$.
Fix an open neighborhood $W$ of $a$ in $V$ equivalent
to an analytic subset in an open subset $\Omega$ in $\bC^d$ for some $d\in\bN$,
and fix a subdomain $W'\Subset W$ containing $a$. 

If $\liminf_{n\to\infty}\diam_{\delta}f(\partial\bD(|z_n|))>0$, then
we are done.
So, suppose that $\liminf_{n\to\infty}\diam_{\delta}f(\partial\bD(|z_n|))=0$.
Taking a subsequence if necessary, we can even assume that
\begin{gather}
 \lim_{n\to\infty}\diam_{\delta}f(\partial\bD(|z_n|))=0.\label{eq:shrink}
\end{gather}
Then for every $n\in\bN$ large enough,
$f(\partial\bD(|z_n|))\subset W'$.
For every $n\in\bN$ large enough, 
since the origin is an isolated essential singularity
of $f$, by Riemann's extension theorem, the following maximum
\begin{gather*}
 r_n':=\max\{r\in(0,|z_n|):f(\partial\bD(r))\not\subset W'\}>0
\end{gather*}
exists, and then $f(\overline{\bD(|z_n|)}\setminus\bD(r_n'))\subset\overline{W'}$
 by the continuity of $f$.
 Fix a sequence $(z_n')$ in $\bD\setminus\{0\}$ tending to $0$
 as $n\to\infty$ such that 
 for every $n\in\bN$ large enough, 
 $z_n'\in\partial\bD(r_n')$ and $f(z_n')\in\overline{W'}\setminus W'$.
 By the compactness of $\overline{W'}\setminus W'$, taking a subsequence
 if necessary,
 the limit $b:=\lim_{n\to\infty}f(z_n')$
 exists in $\overline{W'}\setminus W'$.
 It remains to show
 that $\liminf_{n\to\infty}\diam_{\delta}f(\partial\bD(|z_n'|))>0$.

 Suppose contrary that $\liminf_{n\to\infty}\diam_{\delta}f(\partial\bD(|z_n'|))=0$.
 Taking a subsequence if necessary, we can even assume that
\begin{gather}
 \lim_{n\to\infty}\diam_{\delta}f(\partial\bD(|z_n'|))=0.\tag{\ref{eq:shrink}'}\label{eq:shrinkanother}
\end{gather}
 Since $a$ and $b$ are distinct points in $W$, which we identify
 with an analytic subset in an open subset $\Omega$ in $\bC^d$,
 there exists
 an affine coordinate system $w=(w_1,\ldots,w_d)$ on $\Omega$ such that
 $w(a)=0$ and that $w_1(b)\neq 0$. Set
 $w\circ f=(f_1,\ldots,f_d):f^{-1}(W)\to w(W)$.
 Then, for every $n\in\bN$ large enough, under the assumptions \eqref{eq:shrink}
 and \eqref{eq:shrinkanother},
 we have both
 \begin{gather}
 f_1(\partial\bD(|z_n|))\subset\bD(|w_1(b)|/3)\quad\text{and}\quad
 f_1(\partial\bD(|z_n'|))\subset\bD(w_1(b),|w_1(b)|/3).\label{eq:outside}
 \end{gather}
 Fix such $n\in\bN$ as satisfies \eqref{eq:outside}.
 Let $\ell$ be a line segment in the ring domain
 $\bD(|z_n|)\setminus\overline{\bD(|z_n'|)}(\Subset f^{-1}(W))$ 
 having one end point in $\partial\bD(|z_n|)$ and 
 the other in $\partial\bD(|z_n'|)$.
 Then the path $f_1(\ell)$ in $w_1(W)$ joins the closed curves
 $f_1(\partial\bD(|z_n|))$ and 
 $f_1(\partial\bD(|z_n'|))$, so by \eqref{eq:outside},
 we may fix $y_0\in\ell$ such that
 \begin{gather}
 f_1(y_0)\not\in \overline{\bD(|w_1(b)|/3)}\cup\overline{\bD(w_1(b),|w_1(b)|/3)}.\label{eq:inside}
 \end{gather}
 Since $f_1$ is a holomorphic function on $f^{-1}(W)$ and takes the value $f_1(y_0)$
 at least at $y_0\in\bD(|z_n|)\setminus\overline{\bD(|z_n'|)}$, 
 by the residue theorem,
\begin{multline}
 1\le\frac{1}{2i\pi}\int_{\partial(\bD(|z_n|)\setminus\overline{\bD(|z_n'|)})}\frac{f_1'(z)\rd z}{f_1(z)-f(y_0)}\\
=\frac{1}{2i\pi}\int_{(f_1)_*(\partial\bD(|z_n|))}\frac{\rd w_1}{w_1-f(y_0)}
  -\frac{1}{2i\pi}\int_{(f_1)_*(\partial\bD(|z_n'|))}\frac{\rd w_1}{w_1-f(y_0)},\label{eq:argument}
\end{multline}
where the boundary
$\partial(\bD(|z_n|)\setminus\overline{\bD(|z_n'|)})$ is canonically oriented.
On the other hand, 
by \eqref{eq:outside} and \eqref{eq:inside}, the argument principle
yields
 \begin{gather*}
\frac{1}{2i\pi}\int_{(f_1)_*(\partial\bD(|z_n|))}\frac{\rd w_1}{w_1-f(y_0)}
=\frac{1}{2i\pi}\int_{(f_1)_*(\partial\bD(|z_n'|))}\frac{\rd w_1}{w_1-f(y_0)}=0,
 \end{gather*} 
which contradicts \eqref{eq:argument}. 

Hence 
$\liminf_{n\to\infty}\diam_{\delta}f(\partial\bD(|z_n'|))>0$,
and the proof is complete. \qed

\begin{remark}
 The final residue theoretic argument applied to $f_1$
 can be replaced by a more topological
 argument (for $f_1$) as in \cite[Proof of Lemma 3.1]{OPrescaling}. In 
 \cite[Lemma 3.1]{OPrescaling}, the target Riemannian $n$-manifold $M$ of a
 quasiregular mapping $f:\bB^n\setminus\{0\}\to M$ was assumed to be compact,
 but this assumption can be relaxed as $\bigcap_{r>0}\overline{f(\bB^n(r)\setminus\{0\})}\neq\emptyset$ as in Theorem \ref{th:HeinonenRossi}. Moreover, in 
 \cite[Lemma 3.1]{OPrescaling}, we only claimed that
 $\limsup_{r\to 0}\diam(f(\partial\bB^n(r)))>0$, but this assertion can be
 so strengthen that there exists a sequence $(x_j)$ in $\bB^n\setminus\{0\}$ 
 tending to $0$ as $j\to\infty$ such that $\lim_{j\to\infty}f(x_j)$ exists in $M$
 and that $\liminf_{j\to\infty}\diam(f(\partial\bB^n(|x_j|)))>0$, 
 as in Theorem \ref{th:HeinonenRossi}.
\end{remark} 

\section{Proof of Theorem \ref{th:PicardKobayashi}}
\label{sec:Picard}

Recall that
$d_V$ denotes the Kobayashi pseudometric on a complex space $V$ and
that, for every metric $\delta$ on $V$, we set
$\diam_{\delta}(S)=\sup\{\delta(a,a'):a,a'\in S\}$
for a non-empty subset $S$ in $V$. 

Let $Y$ be a complex subspace in a complex space $Z$, fix a metric 
$\delta$ on $Z$ inducing the topology of $Z$,
and let $f:\bD\setminus\{0\}\to Y$ be a holomorphic mapping. 

Suppose that $\bigcap_{r>0}\overline{f(\bD(r)\setminus\{0\})}\neq\emptyset$
(as a subset in $Z$)
and that $f$ does not extend to a holomorphic mapping from $\bD$ to $Z$.
We claim that $Y$ is not a hyperbolically imbedded complex subspace in $Z$.

Under the above assumption, by Theorem \ref{th:HeinonenRossi}, there exists
a sequence $(z_n)$ in $\bD\setminus\{0\}$ tending to
$0$ as $n\to\infty$ such that the limit
\begin{gather*}
 a:=\lim_{n\to\infty}f(z_n)
\end{gather*}
exists in $\overline{Y}$ and that 
$r_0:=\liminf_{n\to 0}\diam_{\delta}(f(\partial\bD(|z_n|)))>0$.
Fix a relatively compact open neighborhood $W$ of $a$ in
$\{p\in Z:\delta(p,a)<r_0\}$. Then,
taking a subsequence of $(z_n)$ if necessary, 
there is a sequence $(w_n)$ in $\bD\setminus\{0\}$ such that
for every $n\in\bN$,
\begin{gather*}
 w_n\in\partial\bD(|z_n|)
\end{gather*}
and $f(w_n)\in\partial W$. By the compactness of $\partial W$,
taking a subsequence of $(z_n)$ if necessary, the limit 
\begin{gather*}
 b:=\lim_{n\to\infty}f(w_n)
\end{gather*}
also exists in $\overline{Y}\setminus\{a\}$.

Now, for every open neighborhoods
$U_a,U_b$ of $a,b$ in $Z$, respectively, we have  
\begin{multline*}
 d_Y(Y\cap U_a,Y\cap U_b)\le\limsup_{n\to\infty}d_Y(f(z_n),f(w_n))\\
\le\limsup_{n\to\infty}d_{\bD\setminus\{0\}}(z_n,w_n)
\le\limsup_{n\to\infty}\diam_{d_{\bD\setminus\{0\}}}(\partial\bD(|z_n|))=0,
\end{multline*}
where the second inequality is by the non-increasing property of
Kobayashi pseudometrics under holomorphisms (see Fact \ref{th:decreasing}) 
and the final equality is by a direct computation
(cf.\ Example \ref{th:puncture}).

Hence $Y$ is not a hyperbolically imbedded complex subspace in $Z$,
and the proof is complete. \qed

\section{Proof of Theorem \ref{th:rescaling}}
\label{sec:rescaling}
Let $V$ be a compact complex space and
$f:\bD\setminus\{0\}\to V$ be a holomorphic mapping 
having an isolated essential singularity at the origin, and
fix a metric $\delta$ on $V$ satisfying the conditions
in Theorem \ref{th:Yamanoi}.

Let us first show the ``only if'' part of Theorem \ref{th:rescaling}.
We study the cases that {\bfseries (i)}
$\limsup_{z\to 0}L_{f,\bD(z,|z|/2)}=\infty$ and
that {\bfseries (ii)} $\limsup_{z\to 0}L_{f,\bD(z,|z|/2)}<\infty$, separately.

Suppose first that $\limsup_{z\to 0}L_{f,\bD(z,|z|/2)}=\infty$.
In this case, the following ``double rescaling'' argument is similar to
that in the proof of Theorem \ref{th:ZB} using Theorem \ref{th:Zalcman}
in Section \ref{sec:background}.
Choose a sequence $(y_k)$ in $\bD\setminus\{0\}$ such that
$\lim_{k\to\infty}y_k=0$ and that
\begin{gather}
 \lim_{k\to\infty}L_{f,\bD(y_k,|y_k|/2)}=\infty.\label{divergenceagain}
\end{gather}
Fix $\epsilon>0$ small enough. Then for every $k\in\bN$,
a holomorphic mapping $g_k:\bD(1+\epsilon)\to V$ is defined by
\begin{gather*}
 g_k(w):= f\left(y_k+\frac{|y_k|}{2}w\right),
\end{gather*}
so that $L_{g_k,\bD}=L_{f,\bD(y_k,|y_k|/2)}$ by \eqref{eq:invariance}.
Hence by \eqref{divergenceagain} and a Marty-type theorem (Theorem \ref{th:Marty}),
the family $\{g_k:k\in\bN\}$ is not normal on $\bD(1+\epsilon/2)$, so that 
by Theorem \ref{th:Zalcman}, there are
sequences $(w_j)$, $(\eta_j)$, and $(k_j)$ in 
$\bC$, $(0,\infty)$, and $\bN$, respectively, 
and a non-constant $g\in\mathcal{O}(\bC,V)$ 
such that $\lim_{j\to\infty}w_j$ exists in $\bD(1+\epsilon/2)$,
that $\lim_{j\to\infty}\eta_j=0$, that $\lim_{j\to\infty}k_j=\infty$, and that
\begin{gather*}
 g(v)=\lim_{j\to\infty}g_{k_j}(w_j + \eta_j v)
\end{gather*}
locally uniformly on $\bC$. For every $j\in\bN$, we have
$g_{k_j}(w_j+\eta_j v)=f((y_{k_j}+(|y_{k_j}|/2)w_j)+((|y_{k_j}|/2)\eta_j)v)$
on $\bD(\eta_j^{-1}(\epsilon/2))$, and set $z_j:=y_{k_j}+(|y_{k_j}|/2)w_j$ and
$\rho_j:=(|y_{k_j}|/2)\eta_j$. We are done
in this case (i)
since $\lim_{j\to\infty}z_j=0$ and $\lim_{j\to\infty}\rho_j=0$.

Suppose next that $\limsup_{z\to 0}L_{f,\bD(z,|z|/2)}<\infty$. 
By the compactness of $V$, 
we have $\bigcap_{r>0}\overline{f(\bD(r)\setminus\{0\})}\neq\emptyset$,
so that by Theorem \ref{th:HeinonenRossi},
there is a sequence $(w_k)$ in $\bD\setminus\{0\}$ 
tending to $0$ as $k\to\infty$ such
that $a:=\lim_{k\to\infty}f(w_k)$ exists in $V$
and that
$\liminf_{k\to\infty}\diam_{\delta}f(\partial\bD(|w_k|))>0$. 

For every $k\in\bN$, a holomorphic mapping
$g_k:\bD(|w_k|^{-1})\setminus\{0\}\to V$ is defined by 
\begin{gather*}
 g_k(v):= f(|w_k|v).
\end{gather*}
Then for every $v\in\bC\setminus\{0\}$, 
\begin{gather*}
\limsup_{k\to\infty}L_{g_k,\bD(v,|v|/2)}
=\limsup_{k\to\infty}L_{f,\bD(|w_k|v,|w_k|v/2)}
\le\limsup_{z\to 0}L_{f,\bD(z,|z|/2)}<\infty,
\end{gather*}
where the equality is by \eqref{eq:invariance} and the next
inequality is by $\lim_{k\to\infty}|w_k|=0$.
Hence by \eqref{eq:comparable}
 and the Arzel\`a--Ascoli theorem,
the family $\{g_k:k\in\bN\}$ is normal
on $\bC\setminus\{0\}$ so,
taking a subsequence if necessary, the locally uniform limit 
$g:=\lim_{k\to\infty}g_k$ exists on $\bC\setminus\{0\}$,
which is in $\mathcal{O}(\bC\setminus\{0\},V)$. It remains to show that
$g$ is non-constant on $\bC\setminus\{0\}$. If $g$ is constant, then
since for every $k\in\bN$, 
\begin{gather*}
 g_k(\partial\bD)=f(\partial\bD(|w_k|))\ni f(w_k), 
\end{gather*}
we have $g\equiv a=\lim_{k\to\infty}f(w_k)$ on $\bC\setminus\{0\}$,
and in turn 
\begin{gather*}
0=\diam_{\delta}(\{a\})=\limsup_{k\to\infty}\diam_{\delta}
(g_k(\partial\bD))=\limsup_{k\to\infty}\diam_{\delta}f(\partial\bD(|w_k|)),
\end{gather*}
which contradicts that $\liminf_{k\to\infty}\diam_{\delta}f(\partial\bD(|w_k|))>0$.
Hence $g$ is non-constant on $\bC\setminus\{0\}$,
and we are also done in this case (ii)
by setting $z_k\equiv 0$ and $\rho_k:=|w_k|$ for each $k\in\bN$. 
Now the proof of the ``only if'' part of Theorem \ref{th:rescaling} is complete.

Suppose now that there are sequences $(z_k)$ and $(\rho_k)$ 
in $\bC$ and $(0,\infty)$, respectively, 
and a non-constant holomorphic mapping $g:X\to V$, 
where $X$ is either $\bC$ or $\bC\setminus \{0\}$, 
such that $\lim_{k\to\infty}z_k=0$ in $\bC$, 
that $\lim_{k\to\infty}\rho_k=0$ in $\bR$, and
that $\lim_{k\to\infty}f(z_k+\rho_k v)= g(v)$
locally uniformly on $X$. If $f$ extends to a holomorphic mapping from $\bD$ to $V$,
then for every $v\in X$, $g(v)=\lim_{k\to\infty}f(z_k+\rho_k v)=f(0)$,
which contradicts that $g$ is non-constant on $X$. Now the proof of
the ``if'' part of Theorem \ref{th:rescaling} is also complete. \qed

\begin{remark}
The proof of Theorem \ref{th:rescaling} is similar to 
\cite[Proof of Theorem 1]{OPrescaling} for quasiregular mappings. 
In the holomorphic curve case, however,
the {\itshape Lipschitz continuity on disks} \eqref{eq:Lipschitz}
of holomorphic curves and the invariance of the Kobayashi pseudometrics under
biholomorphisms between complex spaces make the argument much simpler
than that in the quasiregular case.
\end{remark}

\section{On Remark \ref{th:Julia}}
\label{sec:Julia}

Let $V$ be a compact Hermitian manifold equipped with an Hermitian metric
$\delta_V$. Then $\delta_V$ satisfies the properties in Theorem \ref{th:Yamanoi}
(cf.\ \cite[\S2.3]{Yamanoisurvey}). 
Let $f:\bD\setminus\{0\}\to V$
be a holomorphic curve having an isolated essential singularity at the origin, 
and recall that $f^\#(z):=\lim_{w\to z}\delta_V(f(z),f(w))/|z-w|$ 
on $\bD\setminus\{0\}$. Then $f^*\rd s_V=f^\#|\rd z|$ on $\bD$, where
$\rd s_V$ is the arc-length element of $\delta_V$ on $V$.

Let us see that, 
{\itshape in the proof of the ``only if'' part of Theorem $\ref{th:rescaling}$,
the case $\limsup_{z\to 0}L_{f,\bD(z,|z|/2)}<\infty$
occurs if and only if
$f$ is exceptional in Julia's sense}:
by \eqref{eq:comparable}, for every $z\in\bD(2/3)\setminus\{0\}$, we have
\begin{multline*}
 L_{f,\bD(z,|z|/2)}\ge 
 \lim_{w\to z}\frac{\delta_V(f(z),f(w))}{d_{\bD(z,|z|/2)}(z,w)}\\
 \ge\lim_{w\to z}\frac{f^\#(z)}{(|z|/2)/(|z|/2)^2-|w-z|^2)}=\frac{|z|}{2}\cdot f^\#(z).
\end{multline*}
Hence
$f$ is exceptional in Julia's sense if $\limsup_{z\to 0}L_{f,\bD(z,|z|/2)}<\infty$.

If $\limsup_{z\to 0}L_{f,\bD(z,|z|/2)}=\infty$,
then there are sequences $(z_n),(z_n')$, and $(z_n'')$
in $\bC\setminus\{0\}$
such that $\lim_{n\to\infty}z_n=0$,
that $\lim_{n\to\infty}L_{f,\bD(z_n,|z_n|/2)}=\infty$,
and that for every $n\in\bN$,
the points $z_n',z_n''$ are in $\bD(z_n,|z_n|/2)$ and distinct and satisfies
\begin{gather}
 \frac{1}{2}L_{f,\bD(z_n,|z_n|/2)}
 \le\frac{\delta_V(f(z_n'),f(z_n''))}{d_{\bD(z_n,|z_n|/2)}(z_n',z_n'')}.\label{eq:divergenceFatou}
\end{gather}
For every $n\in\bN$,
there is also $w_n\in\bD(z_n,|z_n|/2)$
on the Euclidean line segment joining $z_n',z_n''$ such that
\begin{gather}
 \delta_V(f(z_n'),f(z_n''))\le f^\#(w_n)|z_n'-z_n''|.\label{eq:average}
\end{gather}
For every $n\in\bN$, we have $|z_n|/2\le|w_n|\le 3|z_n|/2$, and then
by \eqref{eq:divergenceFatou}, \eqref{eq:average}, and \eqref{eq:comparable},
\begin{gather*}
 \frac{1}{2}L_{f,\bD(z_n,|z_n|/2)}
 \le\frac{f^\#(w_n)|z_n'-z_n''|}{d_{\bD(z_n,|z_n|/2)}(z_n',z_n'')}
 \le\frac{f^\#(w_n)}{1/(|z_n|/2)}\le |w_n|f^\#(w_n)
\end{gather*}
so
\begin{gather*}
\limsup_{z\to\infty}|z|f^\#(z)\ge\limsup_{n\to\infty}|w_n|f^\#(w_n)
\ge\frac{1}{2}\limsup_{n\to\infty}L_{f,\bD(z_n,|z_n|/2)}=\infty. 
\end{gather*}
Hence $f$ is not exceptional in Julia's sense. 

\begin{acknowledgement}
 The author thanks the referee for a careful scrutiny, which helped us
 improve this article.
The author also thanks Professor Katsutoshi Yamanoi for discussions on the Kobayashi hyperbolic geometry of complex spaces, and Professor Pekka Pankka for invaluable comments.
\end{acknowledgement}

\def\cprime{$'$}

\end{document}